\renewcommand*\env@matrix[1][*\c@MaxMatrixCols c]{%
	\hskip -\arraycolsep
	\let\@ifnextchar\new@ifnextchar
	\array{#1}}
\newcommand{\R}{\mathbb{R}}
\newcommand{\F}{\mathbb{F}}
\newcommand{\mg}{\mathfrak{g}}
\newcommand{\mh}{\mathfrak{h}}
\newcommand{\mpp}{\mathfrak{p}}
\newcommand{\bigzero}{\mbox{\normalfont\Large\bfseries 0}}
\newtheorem{thm}{Theorem}[section]
\newtheorem{lemma}[thm]{Lemma}
\newtheorem{definition}[thm]{Definition}
\newtheorem{rem}[thm]{Remark}
\newtheorem{prop}[thm]{Proposition}
\newtheorem{ex}[thm]{Example}
\def\blfootnote{\xdef\@thefnmark{}\@footnotetext}
\date{}
\begin{document}
	\sloppy
	
	\title{Isotopisms of nilpotent \\ Leibniz algebras and Lie racks\blfootnote{\textit{Keywords}: Leibniz algebra, Lie rack, Coquecigrue problem, Isotopism.} \blfootnote{\textit{\textup{2020} Mathematics Subject Classification}: 17A32, 17A36, 17B30, 20M99, 22A30.}} \maketitle \noindent
	{{Gianmarco La Rosa}\footnote[2]{The first two authors are supported by University of Palermo, by the “National Group for Algebraic and Geometric Structures, and their Applications” (GNSAGA – INdAM), by the National Recovery and Resilience Plan (NRRP), Mission 4, Component 2, Investment 1.1, Call for tender No.\ 1409 published on 14/09/2022 by the Italian Ministry of University and Research (MUR), funded by the European Union -- NextGenerationEU -- Project Title \textbf{Quantum Models for Logic, Computation and Natural Processes} (\textbf{QM4NP}) -- CUP \textbf{B53D23030160001} -- Grant Assignment Decree No.\ 1371 adopted on 01/09/2023 by the Italian Ministry of Ministry of University and Research (MUR), and by the \textbf{Sustainability Decision Framework} (\textbf{SDF}) Research Project --  CUP \textbf{B79J23000540005} -- Grant Assignment Decree No.\ 5486 adopted on 04/08/2023. 
	The third author is supported by the NKFIH-OTKA Grants SNN 132625, and by the Program of Excellence TKP2021-NVA-02 at the Budapest University of Technology and Economics.}, {Manuel Mancini}\footnotemark[2]
\\  \footnotesize{Dipartimento di Matematica e Informatica, Università degli Studi di Palermo}\\
    \footnotesize{Via Archirafi 34, 90123 Palermo, Italy}\\
		\footnotesize{gianmarco.larosa@unipa.it}, ORCID: 0000-0003-1047-5993 \\
	    \footnotesize{manuel.mancini@unipa.it}, ORCID: 0000-0003-2142-6193}
	
	\bigskip\noindent
	{{Gábor P.\ Nagy\footnotemark[2]} \\	
		\footnotesize{Department of Algebra and Geometry, Budapest University of Technology and Economics\\M\H{u}egyetem rkp 3, H-1111 Budapest, Hungary\\ \\
		Bolyai Institute, University of Szeged\\Aradi V\'ertan\'uk Tere 1, H-6720 Szeged, Hungary}\\
		\footnotesize{nagyg@math.u-szeged.hu}, ORCID: 0000-0002-9558-4197}
	
	\begin{abstract}
		In this paper we study the isotopism classes of two-step nilpotent algebras. We show that every nilpotent Leibniz algebra $\mathfrak{g}$ with $\dim[\mathfrak{g},\mathfrak{g}]=1$ is isotopic to the Heisenberg Lie algebra or to the Heisenberg algebra $\mathfrak{l}_{2n+1}^{J_1}$, where $J_1$ is the $n \times n$ Jordan block of eigenvalue 1. We also prove that two such algebras are isotopic if and only if the Lie racks integrating them are isotopic. This gives the classification of Lie racks whose tangent space at the unit element is a nilpotent Leibniz algebra with one-dimensional commutator ideal. Eventually, we introduce new isotopism invariants for Leibniz algebras and Lie racks.
	\end{abstract}
	
\bigskip
	
\section*{Introduction}
	
Leibniz algebras were first introduced by J.-L.\ Loday in \cite{loday1993version} as a non-antisymmetric version of Lie algebras. Earlier, such algebraic structures were considered by A.\ Blokh, who called them D-algebras \cite{blokhLie} for their strict connection with the derivations. Leibniz algebras play a significant role in different areas of mathematics and physics.

Many results of Lie algebras were also established in the frame of Leibniz algebras. One of them is the \emph{Levi decomposition} (see \cite{ayupov2019leibniz}), which states that every finite-dimensional Leibniz algebra $\mathfrak{g}$ over a field $\F$, with $\operatorname{char}(\F)=0$, is the semidirect product of a solvable ideal (the \emph{radical} of $\mg$) and a semisimple subalgebra, which is a Lie algebra. This makes clear the importance of the problem of Lie and Leibniz algebras classification, which has been dealt with since the 20th century (see \cite{bartolone2011nilpotent}, \cite{BarDiFal18}, \cite{3-dimApuyov}, \cite{3-dimLadra}, \cite{3-dimF}, \cite{4dimLeib} and \cite{6dimLeib}, just for giving some examples). However in general, given two Leibniz algebras $\mathfrak{g}$ and $\mathfrak{h}$, it is hard to check if $\mathfrak{g}$ and $\mathfrak{h}$ are isomorphic or not. As we will see, in some cases it it is easier to verify if there exists an \emph{isotopism} between them, i.e.\ if there is a triple of linear isomorphisms $(f,g,h)\colon \mathfrak{g} \leftrightarrows \mathfrak{h}$ such that $[f(x),g(y)]_{\mathfrak{h}}=h([x,y]_{\mathfrak{g}})$, for every $x,y \in \mathfrak{g}$.

In \cite{LM2022} two-step nilpotent algebras over a field $\mathbb{F}$, with $\operatorname{char}(\F) \neq 2$, were classified and integrated into Lie racks. However, the problem of finding the isomorphism classes of these algebras is still open, also in the case that $\F$ is the field of complex numbers.

In this paper our goal is to use the notion of \emph{isotopism}, which was introduced by A.\ A.\ Albert in 1942 (see \cite{Albert}, \cite{Albert2} and \cite{IsotopismStory}) in order to classify non-associative algebras, and then used in \cite{IsotopismLie2} and \cite{IsotopismLie1} for classifying \emph{filiform Lie algebras} over finite fields, to find  a complete classification of nilpotent Leibniz algebras with one-dimensional commutator ideal. We prove that two such algebras are isotopic if and only if the Lie racks integrating them are isotopic. By using this result, we are able to find the isotopism classes of Lie racks whose tangent space at the unit element is a nilpotent Leibniz algebra with one-dimensional commutator ideal. 

In Sections 1 and 2 we resume the main results about the classification and the integration of two-step nilpotent algebras  \cite{LM2022, LaRosaMancini2}. We recall that, up to isomorphism, there are only three classes of indecomposable nilpotent Leibniz algebras with one-dimensional commutator ideal, namely the \emph{Heisenberg algebras} $\mathfrak{l}_{2n+1}^A$, parametrized by their dimension $2n+1$ and a $n \times n$ matrix $A$ in canonical form, the \emph{Kronecker algebras} $\mathfrak{k}_n$ and the \emph{Dieudonné algebras} $\mathfrak{d}_n$, both parametrized by their dimension only. We also recall the construction of the Lie rack integrating a two-step nilpotent algebra.

In Section 3 we give the definitions of isotopism of Leibniz algebras and Lie racks. We recall that every isotopism is isomorphic to a so-called \emph{principal isotopism} and we give the explicit construction of an isotopism between the Heisenberg Lie algebra $\mh_3$ and a nilpotent Leibniz non-Lie algebra.

In Section 4 we introduce new isotopism invariants for Leibniz algebras and Lie racks and we prove that every indecomposable nilpotent Leibniz algebra $\mg$ with $\dim_\F [\mg,\mg]=1$ is either isomorphic to the Heisenberg algebra $\mathfrak{l}_{2n+1}^{J_1}$, where $J_1$ is the $n \times n$ Jordan block of eigenvalue $1$, or it is isotopic to the Heisenberg Lie algebra $\mathfrak{h}_{2n+1}$. Finally we show that, if $\mg$ and $\mh$ are real Leibniz algebras with $\dim_\R[\mg,\mg]=\dim_\R[\mh,\mh]=1$ and $X,Y$ are the Lie racks integrating them, then $\mg$ is isotopic to $\mh$ if and only if $X$ is isotopic to $Y$. This implies that a Lie rack $X$ integrating a Leibniz algebra $\mg$, with $\dim_\R [\mg,\mg]=1$, is either isomorphic to the Heisenberg rack $R_{2n+1}^{J_1}$, or it is isotopic to the conjugation $\operatorname{Conj}(H_{2n+1})$ of the Heisenberg Lie group.
	
\section{Preliminaries}
	
We assume that $\F$ is a field with $\operatorname{char}(\F)\neq2$. For the general theory we refer to \cite{ayupov2019leibniz} and \cite{loday1993version}.
	
\begin{definition}
A \emph{left Leibniz algebra} over $\mathbb{F}$ is a vector space $\mg$ over $\mathbb{F}$ endowed with a bilinear map (called $commutator$ or $bracket$) $\left[-,-\right] \colon \mg \times \mg \rightarrow \mg$ which satisfies the \emph{left Leibniz identity}
$$
\left[x,\left[y,z\right]\right]=\left[\left[x,y\right],z\right]+\left[y,\left[x,z\right]\right],\quad \forall x,y,z\in \mg.
$$
		
\end{definition}
	In the same way one can define a right Leibniz algebra, using the right Leibniz identity
	$$
	\left[\left[x,y\right],z\right]=\left[[x,z],y\right]+\left[x,\left[y,z\right]\right],\quad \forall x,y,z\in \mg.
	$$
	A Leibniz algebra that is both left and right is called \emph{symmetric Leibniz algebra}. From now on we assume that $\dim_\F \mg<\infty$.
	
	Every Lie algebra is a Leibniz algebra and every Leibniz algebra with skew-symmetric commutator is a Lie algebra.  The \emph{Leibniz kernel} of $\mg$
	$$
	\operatorname{Leib}(\mg)=\langle \left[x,x\right] |\,x\in \mg \rangle
	$$
is the smallest two-sided ideal of $\mg$ such that $\mg /\operatorname{Leib}(\mg)$ is a Lie algebra. This defines an adjunction \cite{mac2013categories} between the category $\textbf{Lie}$ of Lie algebras over $\F$ and the category $\textbf{Leib}$ of Leibniz algebras over $\F$. The left adjoint of the full inclusion $i \colon \textbf{Lie}\rightarrow \textbf{Leib}$ is $\pi \colon \textbf{Leib} \rightarrow \textbf{Lie}$, defined by $\pi(\mg)=\mg/\operatorname{Leib}(\mg)$.

	For a Leibniz algebra $\mg$, the left center and the right center are respectively
	$$
	\operatorname{Z}_l(\mg)=\left\{x\in \mg \,|\,\left[x,\mg\right]=0\right\},\,\,\, \operatorname{Z}_r(\mg)=\left\{x\in \mg \,|\,\left[\mg,x\right]=0\right\}
	$$
	and they coincide when $\mg$ is a Lie algebra. The \emph{center} of $\mg$ is $\operatorname{Z}(\mg)=\operatorname{Z}_l(\mg)\cap \operatorname{Z}_r(\mg)$. We recall that in general $\operatorname{Z}_l(\mg)$ is an ideal of the left Leibniz algebra $\mg$, meanwhile the right center is not even a subalgebra. Moreover $\operatorname{Leib}(\mg) \subseteq \operatorname{Z}_l(\mg)$, since
	$$
	[[x,x],y]=[x,[x,y]]-[x,[x,y]]=0, \quad \forall x,y \in \mg.
	$$
	
	We recall now the definitions of solvable and nilpotent Leibniz algebra.
	
	\begin{definition}
		Let $\mg$ be a left Leibniz algebra over $\F$ and let
		$$
		\mg^{0}=\mg,\quad \mg^{k+1}=[\mg^k,\mg^{k}],\quad\forall k\geq0
		$$ 
		be the \emph{derived series of $\mg$}. $\mg$ is \emph{$n$-step solvable} if $\mg^{n-1}\neq0$ and $\mg^{n}=0$.
	\end{definition}
	
	\begin{definition}
		Let $\mg$ be a left Leibniz algebra over $\F$ and let
		$$
		\mg^{(0)}=\mg,\quad \mg^{(k+1)}=[\mg,\mg^{(k)}],\quad\forall k\geq0
		$$ 
		be the \emph{lower central series of $\mg$}. $\mg$ is \emph{$n$-step nilpotent} if $\mg^{(n-1)}\neq0$ and $\mg^{(n)}=0$.
	\end{definition}
	
	When $n=2$, two-step nilpotent Leibniz algebras lie in different varieties of non-associative algebras, such as associative, alternative and Zinbiel algebras. For this reason we will refer at them as \emph{two-step nilpotent algebras} and we have the following.
	
	\begin{prop}\cite{LM2022}
		Let $\mg$ be a $n$-step nilpotent Leibniz algebra.
		\begin{itemize}
			\item[(i)] If $n=2$, then $\mg^{(1)}=[\mg,\mg]\subseteq \operatorname{Z}(\mg)$ and $\mg$ is a symmetric Leibniz algebra;
			\item[(ii)] If $\dim_\F [\mg,\mg]=1$, then $n=2$.
		\end{itemize}
	\end{prop}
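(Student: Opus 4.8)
The plan is to treat the two parts separately, in each case reducing everything to the defining $n$-step nilpotency condition together with the left Leibniz identity.

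For part (i), I would begin by unpacking what $n=2$ means: $\mg^{(2)}=[\mg,[\mg,\mg]]=0$ while $\mg^{(1)}=[\mg,\mg]\neq 0$. The condition $[\mg,[\mg,\mg]]=0$ says at once that every element of $[\mg,\mg]$ is annihilated when it sits on the right, i.e. $[\mg,\mg]\subseteq \operatorname{Z}_r(\mg)$. The substantive step is the dual inclusion $[\mg,\mg]\subseteq \operatorname{Z}_l(\mg)$: writing a generic generator of $[\mg,\mg]$ as $[x,y]$ and taking any $z\in\mg$, the left Leibniz identity rearranges to $[[x,y],z]=[x,[y,z]]-[y,[x,z]]$, and both terms on the right-hand side lie in $[\mg,[\mg,\mg]]=0$. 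Hence $[\mg,\mg]\subseteq \operatorname{Z}_l(\mg)\cap\operatorname{Z}_r(\mg)=\operatorname{Z}(\mg)$, which is the first assertion.

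Once the central inclusion is in hand, the symmetry of $\mg$ is essentially automatic. I would verify the right Leibniz identity directly: each of the three brackets $[[x,y],z]$, $[[x,z],y]$, $[x,[y,z]]$ has one factor already lying in $[\mg,\mg]\subseteq\operatorname{Z}(\mg)$, so every one of them vanishes and the identity reduces to the trivial $0=0+0$. Thus $\mg$ is a right Leibniz algebra as well, hence a symmetric Leibniz algebra.

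For part (ii), the key observation is that $[\mg,\mg]$ is always a two-sided ideal, so left multiplication $\operatorname{ad}_x\colon w\mapsto[x,w]$ carries the one-dimensional space $[\mg,\mg]$ into itself and therefore acts on it as a scalar; fixing a spanning vector $z$, I write $[x,z]=\lambda(x)\,z$ for a scalar $\lambda(x)\in\F$. I would then read off the lower central series starting from $\mg^{(1)}=\langle z\rangle$. If $\lambda(x)\neq 0$ for some $x$, then $\mg^{(2)}=[\mg,\langle z\rangle]=\langle z\rangle=\mg^{(1)}$, and inductively $\mg^{(k)}=\langle z\rangle\neq 0$ for every $k$, contradicting the nilpotency of $\mg$. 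Hence $\lambda\equiv 0$, that is $\mg^{(2)}=[\mg,[\mg,\mg]]=0$; combined with $[\mg,\mg]\neq 0$ (which already forces $n\geq 2$) this gives $n=2$.

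I expect no serious technical obstacle: part (i) is a two-line manipulation of the Leibniz identity, and the conceptual heart of the whole statement is the self-reproducing-ideal argument in part (ii) — recognizing that one-dimensionality collapses $\operatorname{ad}_x$ to a scalar, and that a single nonzero scalar would make the lower central series stationary and hence never reach $0$. It is worth noting that the blanket hypothesis $\operatorname{char}(\F)\neq 2$ is not needed for either part.
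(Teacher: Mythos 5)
Your proof is correct and complete. Note that the paper states this proposition only as a cited result from \cite{LM2022} and supplies no proof of its own, so there is no argument to diverge from; your route --- deriving $[\mg,\mg]\subseteq\operatorname{Z}_r(\mg)$ directly from $\mg^{(2)}=0$, getting $[\mg,\mg]\subseteq\operatorname{Z}_l(\mg)$ by rearranging the left Leibniz identity, checking the right Leibniz identity term by term for symmetry, and in (ii) observing that $\operatorname{ad}_x$ acts as a scalar on the one-dimensional ideal $[\mg,\mg]$ so that a single nonzero scalar would make the lower central series stationary --- is the standard one, and your observation that $\operatorname{char}(\F)\neq 2$ is not needed is accurate.
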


Nilpotent Leibniz algebras with one-dimensional commutator ideal were classified in \cite{LM2022}. It was proved that, up to isomorphism, there are only the following three classes of indecomposable nilpotent Leibniz algebras with one-dimensional commutator ideal.

\begin{definition}\cite{LM2022}
	Let $f(x)\in\F\left[x\right]$ be a monic irreducible polynomial. Let $k\in\mathbb{N}$ and let $A=(a_{ij})_{i,j}$ be the companion matrix of $f(x)^k$. The \emph{Heisenberg} Leibniz algebra $\mathfrak{l}_{2n+1}^A$ is the $(2n+1)$-dimensional Leibniz algebra with basis $\left\{e_1,\ldots,e_n,f_1,\ldots,f_n,z\right\}$ and the Leibniz brackets are given by
	$$
	[e_i,f_j]=(\delta_{ij}+a_{ij})z, \; [f_j,e_i]=(-\delta_{ij}+a_{ij})z, \; \; \forall i,j=1,\ldots,n.
	$$
\end{definition}
	
	If $A$ is the zero matrix, then we obtain the Heisenberg algebra $\mathfrak{h}_{2n+1}$, i.e.\ the $(2n+1)-$dimensional Lie algebra with $[e_i,f_j]=\delta_{ij}h$, for any $i,j=1,\ldots,n$, where $\delta_{ij}$ is the \emph{Kronecker delta}.

\begin{definition}\cite{LM2022}
	Let $n \in \mathbb{N}$. The \emph{Kronecker} Leibniz algebra $\mathfrak{k}_{n}$ is the $(2n+1)$-dimensional Leibniz algebra with basis $\left\{e_1,\ldots,e_n,f_1,\ldots,f_n,z\right\}$ and the Leibniz brackets are given by
	\begin{align*}
		&[e_i,f_i] = [f_i,e_i] = z, \; \; \forall i=1,\ldots,n \\
		& [e_i,f_{i-1}] = z, [f_{i-1},e_i] = −z, \; \; \forall i= 2,\ldots,n.
	\end{align*}
\end{definition}

\begin{definition}\cite{LM2022}
	Let $n \in \mathbb{N}$. The \emph{Dieudonné} Leibniz algebra $\mathfrak{d}_n$ is the $(2n+2)$-dimensional Leibniz algebra with basis $\left\{e_1,\ldots,e_{2n+1},z\right\}$ and the Leibniz brackets are given by
	\begin{align*}
		&\left[e_1,e_{n+2}\right]=z,\\
		&\left[e_i,e_{n+i}\right]=\left[e_i,e_{n+i+1}\right]=z, \; \; \forall i=2,\ldots,n,\\
		&\left[e_{n+1},e_{2n+1}\right]=z,\\
		&\left[e_i,e_{i-n}\right]=z, \; \; \left[e_i,e_{i-n-1}\right]=-z, \; \; \forall i=n+2,\ldots,2n+1.
	\end{align*}
	
\end{definition}
	
	For every $n\in\mathbb{N}$, the Kronecker algebra $\mathfrak{k}_{n}$ and the Dieudonné algebra $\mathfrak{d}_{n}$ are not Lie algebras and they are unique up to isomorphisms.
	
	Now we recall the description of the Heisenberg algebras in the case the field $\F$ is $\mathbb{C}$ or $\R$.
	
	\subsection{Complex Heisenberg algebras}\label{HeisC}
	Let $n\in\mathbb{N}$ and let $f(x)=x-a\in\mathbb{C}[x]$. Then the companion matrix $A$ of $f(x)^n$ is similar to the $n\times n$ Jordan block $J_a$ of eigenvalue $a$. Thus $\mathfrak{l}_{2n+1}^A\cong\mathfrak{l}_{2n+1}^{J_a}$ and the non-trivial commutators are
	\begin{align*}
			&\left[e_i,f_i\right]=(1+a)h, \quad \left[f_i,e_i\right]=(-1+a)h, \quad \forall i=1,\ldots,n,\\
		&\left[e_i,f_{i-1}\right]=\left[f_{i-1},e_i\right]=h, \quad \forall i=2,\ldots,n,
	\end{align*}
	where $\left\{e_1,\ldots,e_n,f_1,\ldots,f_n,h\right\}$ is a basis of $\mathfrak{l}_{2n+1}^{J_a}$. One can check that
	$$
	\operatorname{Z}_l(\mathfrak{l}_{2n+1}^{J_a}))=\operatorname{Z}_r(\mathfrak{l}_{2n+1}^{J_a}))=[\mathfrak{l}_{2n+1}^{J_a},\mathfrak{l}_{2n+1}^{J_a}]=\F h, \quad \forall a \neq \pm 1
	$$
	$$
	\operatorname{Z}_l(\mathfrak{l}_{2n+1}^{J_1})=\operatorname{Z}_r(\mathfrak{l}_{2n+1}^{J_{-1}})=\langle f_n,h \rangle,
	$$
	$$
	\operatorname{Z}_r(\mathfrak{l}_{2n+1}^{J_1})=\operatorname{Z}_l(\mathfrak{l}_{2n+1}^{J_{-1}})=\langle e_1,h \rangle.
	$$
	and we have the following.
	
	\begin{prop}\label{propHeis} \cite{LM2022} 
		Let $a \in \mathbb{C}$. The Heisenberg algebras $\mathfrak{l}_{2n+1}^{J_a}$ and $\mathfrak{l}_{2n+1}^{J_{-a}}$ are isomorphic.
	\end{prop}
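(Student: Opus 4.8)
The plan is to exhibit an explicit isomorphism $\phi\colon \mathfrak{l}_{2n+1}^{J_a}\to \mathfrak{l}_{2n+1}^{J_{-a}}$ and to check that it respects the defining brackets. Write $\{e_1,\dots,e_n,f_1,\dots,f_n,h\}$ for the basis of the source and $\{e_1',\dots,e_n',f_1',\dots,f_n',h'\}$ for the basis of the target, so that in the target $[e_i',f_i']=(1-a)h'$ and $[f_i',e_i']=(-1-a)h'$. The guiding idea is that interchanging the roles of the $e$'s and the $f$'s swaps the two diagonal coefficients $1+a$ and $-1+a$, and composing this swap with a sign change on $h$ turns $1+a$ into $-(-1+a)=1-a$, which is exactly the diagonal coefficient of the $(-a)$-algebra. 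I would therefore search for a map of the form $e_i\mapsto \varepsilon_i f_{\sigma(i)}'$, $f_i\mapsto \eta_i e_{\sigma(i)}'$, $h\mapsto -h'$, for suitable signs $\varepsilon_i,\eta_i\in\{\pm1\}$ and a permutation $\sigma$ of $\{1,\dots,n\}$.

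First I would fix $\sigma$. The naive choice $\sigma=\mathrm{id}$ handles the diagonal brackets but destroys the cross relations $[e_i,f_{i-1}]=h$: after the swap these land on $[f_i',e_{i-1}']$, which vanishes in the target, because there the surviving cross bracket is $[f_{i-1}',e_i']=h'$, i.e.\ $[f_j',e_{j+1}']=h'$. Reversing the order, $\sigma(i)=n+1-i$, repairs this, since then $[e_i,f_{i-1}]$ is sent to $[f_{n+1-i}',e_{n+2-i}']=[f_j',e_{j+1}']=h'$, which is nonzero. So I would take $\sigma(i)=n+1-i$ together with $\phi(h)=-h'$.

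It then remains to pin down the signs. Requiring that $\phi$ preserve the diagonal brackets $[e_i,f_i]$ and $[f_i,e_i]$ forces $\varepsilon_i\eta_i=1$ for every $i$, while preserving the cross brackets $[e_i,f_{i-1}]=[f_{i-1},e_i]=h$ forces $\varepsilon_i\eta_{i-1}=-1$ for $i=2,\dots,n$. These two families of conditions are compatible: taking $\eta_i=\varepsilon_i$ and the alternating pattern $\varepsilon_i=(-1)^{i-1}$ satisfies both simultaneously. With this choice every defining bracket is matched, and the remaining brackets are zero on both sides since $[e,e]$, $[f,f]$ and all brackets involving $h$ vanish; hence $\phi$ is a Leibniz algebra isomorphism.

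The computation is elementary bilinear bookkeeping; the only genuinely delicate point, and the one I expect to be the main obstacle, is recognizing that the sign $\phi(h)=-h'$ demanded by the diagonal relations is incompatible with a sign-free, order-preserving swap on the cross relations, so that both the order reversal $\sigma(i)=n+1-i$ and the alternating signs $\varepsilon_i=(-1)^{i-1}$ are needed precisely to reconcile the two. I would also note that the argument is uniform in $a$: no bracket identity requires dividing by $1+a$ or $1-a$, so the isomorphism is valid for every $a\in\mathbb{C}$, including the degenerate values $a=\pm1$.
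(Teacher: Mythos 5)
Your construction is correct: with $\sigma(i)=n+1-i$, $\varepsilon_i=\eta_i=(-1)^{i-1}$ and $h\mapsto -h'$, one has $\varepsilon_i\eta_i=1$ and $\varepsilon_i\eta_{i-1}=-1$, so the diagonal brackets $(1+a)h\mapsto -(1+a)h'=\varepsilon_i\eta_i[f_{\sigma(i)}',e_{\sigma(i)}']$ and $(-1+a)h\mapsto(1-a)h'$ and the cross brackets all match, while the order reversal carries the vanishing pattern $[e_i,f_j]=0$ for $j\notin\{i,i-1\}$ exactly onto the corresponding vanishing pattern $[f_{\sigma(i)}',e_{\sigma(j)}']=0$ in the target, so no spurious nonzero brackets appear. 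The paper states this proposition only as a citation to \cite{LM2022} and gives no proof to compare against; your explicit signed swap is a complete, self-contained verification, and your closing observation that no division by $1\pm a$ occurs correctly shows the isomorphism is valid for every $a\in\mathbb{C}$, including $a=\pm1$.
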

	
	We observe that the problem of finding the isomorphism classes of the complex Heisenberg algebras is still open, but when $n=1$ the converse result of Proposition \ref{propHeis} is also true.
	
	\begin{prop}\cite{LM2022}
		Let $a,a'\in\mathbb{C}$. Then $\mathfrak{l}_3^a$ and $\mathfrak{l}_3^{a'}$ are isomorphic if and only if $a'=\pm a$.
	\end{prop}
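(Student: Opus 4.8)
The plan is to prove the two implications separately. The direction $a'=\pm a \Rightarrow \mathfrak{l}_3^a\cong\mathfrak{l}_3^{a'}$ is immediate: the case $a'=a$ is trivial, while $a'=-a$ is precisely the content of Proposition \ref{propHeis} specialized to $n=1$ (explicitly, $e\mapsto f'$, $f\mapsto e'$, $h\mapsto -h'$). So the substance lies in the converse, for which I would extract a numerical isomorphism invariant from the bracket.

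First I would encode the multiplication of $\mathfrak{l}_3^a$ as a single bilinear form. Since $[\mathfrak{l}_3^a,\mathfrak{l}_3^a]=\mathbb{C}h$ is one-dimensional and central, the bracket descends to a bilinear form $B_a$ on the quotient $V=\mathfrak{l}_3^a/\mathbb{C}h$ with values in $\mathbb{C}h\cong\mathbb{C}$; in the basis $\{\bar e,\bar f\}$ its matrix is
\[
B_a=\begin{pmatrix} 0 & 1+a \\ a-1 & 0\end{pmatrix},
\]
whose symmetric and antisymmetric parts are $S_a=\begin{pmatrix}0&a\\a&0\end{pmatrix}$ and $K=\begin{pmatrix}0&1\\-1&0\end{pmatrix}$, the latter being independent of $a$.

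Next I would translate an algebra isomorphism into a matrix identity. Any isomorphism $\phi\colon\mathfrak{l}_3^a\to\mathfrak{l}_3^{a'}$ preserves the commutator ideal, hence $\phi(h)=\lambda h'$ for some $\lambda\in\mathbb{C}^\ast$, and it induces a linear isomorphism of the quotients with matrix $P\in GL_2(\mathbb{C})$. Writing out $\phi([x,y]_a)=[\phi x,\phi y]_{a'}$ on the quotient yields $P^{\mathsf T}B_{a'}P=\lambda B_a$, and since the congruence $P^{\mathsf T}(-)P$ respects the symmetric/antisymmetric decomposition, this splits into $P^{\mathsf T}S_{a'}P=\lambda S_a$ together with $P^{\mathsf T}KP=\lambda K$.

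Finally I would take determinants of both identities. From $P^{\mathsf T}KP=\lambda K$ and $\det K=1$ I obtain $\det(P)^2=\lambda^2$, while $P^{\mathsf T}S_{a'}P=\lambda S_a$ gives $\det(P)^2\det S_{a'}=\lambda^2\det S_a$. Combining the two and cancelling $\lambda^2\neq0$ yields $\det S_{a'}=\det S_a$, i.e.\ $-a'^2=-a^2$, so $a'=\pm a$. I expect the only delicate point to be the faithful passage between the Leibniz-algebra isomorphism and the pair $(P,\lambda)$---namely that $\mathbb{C}h$ is a characteristic ideal, so the bracket condition is equivalent to the single congruence $P^{\mathsf T}B_{a'}P=\lambda B_a$---whereas the determinant computation itself is routine and uniformly covers the degenerate case $a=0$.
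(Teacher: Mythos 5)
Your proof is correct: the reduction of an isomorphism to the congruence $P^{\mathsf T}B_{a'}P=\lambda B_a$ is legitimate because $[\mathfrak{l}_3^a,\mathfrak{l}_3^a]=\mathbb{C}h$ is central and characteristic, the splitting into symmetric and antisymmetric parts is preserved by congruence, and the two determinant identities $\det(P)^2=\lambda^2$ and $\det(P)^2\det S_{a'}=\lambda^2\det S_a$ give $a'^2=a^2$, with the explicit swap $e\mapsto f'$, $f\mapsto e'$, $h\mapsto -h'$ handling the converse. The present paper only cites this proposition from \cite{LM2022} without proof, and your argument is essentially the method used there (classification of two-step nilpotent algebras via simultaneous congruence of the symmetric and antisymmetric parts of the structure form, up to a scalar), so there is nothing to correct.
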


In Theorem \ref{thmiso} we will show that, for any $a \neq \pm 1$, there exists an \emph{isotopism} between $\mathfrak{l}_{2n+1}^{J_a}$ and the Heisenberg Lie algebra $\mathfrak{h}_{2n+1}$.
	
	\subsection{Real Heisenberg algebras}\label{secHeisReal}
	 Let $f(x)\in\R[x]$ be an irreducible monic polynomial. If $f(x)=x-a$, then $\mathfrak{l}_{2n+1}^A\cong\mathfrak{l}_{2n+1}^{J_a}$, as in the complex case. So we suppose that $f(x)=x^2+bx+c$, with $b^2-4c<0$.

	Let $z=\alpha + i \beta \in \mathbb{C}$ be a root of $f(x)$. Then $f(x)=(x-z)(x-\bar{z})$ and the companion matrix $A$ of $f(x)^n$ in similar to the $2n \times 2n$ real block matrix
	$$
	J_R=\begin{pmatrix}
		R&0&\cdots&0\\I_2&R&\cdots&0\\\vdots&\ddots&\ddots&\vdots\\0&\cdots&I_2&R
	\end{pmatrix}, \quad \text{where} \; \; R=\begin{pmatrix}
	\alpha & \beta \\
	-\beta & \alpha
\end{pmatrix}
	$$
	is the realification of the complex number $z$. Thus $\mathfrak{l}_{4n+1}^{A} \cong \mathfrak{l}_{4n+1}^{J_R}$ and we have the following, when $n=1$.
	\begin{prop}\cite{LM2022}
		Let $f(x),g(x) \in \mathbb{R}[x]$ be irreducible monic polynomials of degree two. Let $z,z' \in \mathbb{C}$ be roots of $f(x)$ and $g(x)$ respectively and let $R,R'$ be the realifications of $z$ and $z'$. Then $\mathfrak{l}_5^{R} \cong \mathfrak{l}_5^{R'}$ if and only if $R'=\pm R$.
	\end{prop}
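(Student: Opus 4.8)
The plan is to translate the isomorphism problem into linear algebra over $\R$ and to isolate an isomorphism invariant that is insensitive to the rescaling an isomorphism may induce on the one-dimensional commutator ideal. Since $[\mathfrak{l}_5^R,\mathfrak{l}_5^R]=\F h$ is one-dimensional and central, the bracket descends to a bilinear form $B_R\colon V\times V\to\F$ on the quotient $V=\mathfrak{l}_5^R/\F h$ via $[x,y]=B_R(\bar x,\bar y)\,h$. In the ordered basis $\bar e_1,\bar e_2,\bar f_1,\bar f_2$ its matrix is
\[ B_R=\begin{pmatrix} 0 & I+R \\ -I+R^T & 0 \end{pmatrix}, \]
and splitting it into antisymmetric and symmetric parts $B_R=\Omega+S_R$ a direct computation gives
\[ \Omega=\begin{pmatrix} 0 & I \\ -I & 0 \end{pmatrix}, \qquad S_R=\begin{pmatrix} 0 & R \\ R^T & 0 \end{pmatrix}. \]
Thus $\Omega$ is the standard symplectic form, the same for every $R$, while all the dependence on $R$ is concentrated in the symmetric part $S_R$.

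For the forward implication, I would observe that an isomorphism $\phi\colon\mathfrak{l}_5^R\to\mathfrak{l}_5^{R'}$ must send $\F h$ to $\F h'$, so $\phi(h)=\mu h'$ with $\mu\neq 0$, and it induces $P\in GL(V)$ with $P^T B_{R'}P=\mu B_R$. Comparing antisymmetric and symmetric parts yields $P^T\Omega P=\mu\Omega$ and $P^T S_{R'}P=\mu S_R$ separately. The key point is that the operator $T_R:=\Omega^{-1}S_R$ then transforms by pure conjugation, with the scalar $\mu$ cancelling: substituting the two relations gives $T_R=P^{-1}T_{R'}P$. Since $T_R=\operatorname{diag}(-R^T,R)$ has eigenvalues $z,\bar z,-z,-\bar z$, the equality of characteristic polynomials forces $\{z,\bar z,-z,-\bar z\}=\{z',\bar z',-z',-\bar z'\}$, hence $|\alpha'|=|\alpha|$ and $|\beta'|=|\beta|$, i.e.\ $R'\in\{\pm R,\pm R^T\}$. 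Recalling that $R$ and $R^T$ are the realifications of the two conjugate roots $z,\bar z$ of the same irreducible polynomial, and are therefore identified, this is precisely $R'=\pm R$.

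For the converse I would exhibit explicit isomorphisms. First, a structure-preserving change of basis $e_i\mapsto\sum_k M_{ki}e_k$, $f_i\mapsto\mu\sum_k (M^{-T})_{ki}f_k$, $h\mapsto\mu h$ turns $\mathfrak{l}_5^R$ into $\mathfrak{l}_5^{M^TRM^{-T}}$, so that similar matrices yield isomorphic algebras; applied to the similarity $R\sim R^T$ (respectively $-R\sim -R^T$) this gives $\mathfrak{l}_5^R\cong\mathfrak{l}_5^{R^T}$ (respectively $\mathfrak{l}_5^{-R}\cong\mathfrak{l}_5^{-R^T}$). Second, the swap $e_i\leftrightarrow f_i$, $h\mapsto -h$ carries $\mathfrak{l}_5^R$ to $\mathfrak{l}_5^{-R^T}$, exactly as in Proposition \ref{propHeis}. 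Composing these gives $\mathfrak{l}_5^R\cong\mathfrak{l}_5^{-R^T}\cong\mathfrak{l}_5^{-R}$, so every $R'\in\{\pm R,\pm R^T\}$ produces an algebra isomorphic to $\mathfrak{l}_5^R$.

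The main obstacle is the forward direction: producing an invariant that survives the unknown scaling $\mu$. The resolution is the observation that, because the antisymmetric part $\Omega$ is non-degenerate and independent of $R$, the pencil $(\Omega,S_R)$ collapses to the single operator $T_R=\Omega^{-1}S_R$, whose conjugacy class is a genuine isomorphism invariant with $\mu$ automatically cancelled. Verifying that the eigenvalues of $T_R$ are exactly $\pm z,\pm\bar z$ and that these recover $\{|\alpha|,|\beta|\}$, hence $R$ up to sign and transpose, is the heart of the argument; the transpose ambiguity is harmless precisely because it reflects the free choice of a root of $f$.
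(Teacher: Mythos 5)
The paper states this proposition without proof, citing \cite{LM2022}, so there is no in-text argument to compare yours against; judged on its own, your proof is correct and essentially complete. The congruence relation $P^{T}B_{R'}P=\mu B_{R}$ on the quotient $V=\mathfrak{l}_5^{R}/\R h$ is legitimate because $[\mg,\mg]\subseteq\operatorname{Z}(\mg)$ for two-step nilpotent algebras, so the bracket genuinely descends to $V$; the splitting into symmetric and antisymmetric parts is preserved by congruence; and the passage to $T_R=\Omega^{-1}S_R$ is exactly the right device to kill the unknown scalar $\mu$, since $T_R=P^{-1}T_{R'}P$ follows directly from the two congruences. Its characteristic polynomial $f(x)f(-x)$ then pins down $f$ up to the substitution $x\mapsto -x$, giving $R'\in\lbrace \pm R,\pm R^{T}\rbrace$, and your explicit maps (the $(M,\mu)$-change of basis realizing similarity, and the swap $e_i\leftrightarrow f_i$, $h\mapsto -h$ realizing $A\mapsto -A^{T}$) show all four choices yield isomorphic algebras, so both implications hold. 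The one point worth making explicit is that your forward direction literally yields $R'\in\lbrace \pm R,\pm R^{T}\rbrace$ rather than $R'=\pm R$; since $R^{T}$ is the realification of the conjugate root $\bar{z}$ of the same polynomial $f$, this matches the statement only under the convention, implicit in the proposition (which leaves the choice of root free), that $R$ and $R^{T}$ are identified --- you flag this correctly, and indeed $R$ and $R^{T}$ are similar, so the two algebras are isomorphic anyway. Incidentally, the operator $T_R$ is the invariant of the pencil of bilinear forms $(\Omega,S_R)$, which is precisely the Kronecker--Dieudonn\'e machinery underlying the classification in \cite{LM2022} (whence the names of the other two algebra families), so your route is very much in the spirit of the source even though it is reconstructed independently.
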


\section{The \emph{coquecigrue problem} for nilpotent Leibniz algebras}

In this section, the underlying field of any vector space is $\F=\R$.

\begin{definition}
		A \emph{(left) rack} is a set $X$ with a binary operation $\rhd\colon X\times X\rightarrow X$ which is \emph{(left) autodistributive}, 
		$$x\rhd\left(y\rhd z\right)=\left(x\rhd y\right)\rhd\left(x\rhd z\right), \quad \forall x,y,z \in X$$
		 and such that $x\rhd - \colon X\rightarrow X$ is a bijection for every $x\in X$. A rack is \emph{pointed} if there exists an element $1\in X$, called the \emph{unit}, such that $1\rhd x=x$ and $x\rhd 1=1$, for every $x\in X$. A rack $(X,\rhd)$ is a \emph{quandle} if $x\rhd x=x$, for every $x\in X$ (i.e.\ $\rhd$ is idempotent).
\end{definition}

Here we work with pointed racks.

\begin{definition}
	Let $(X,\rhd,1)$ and $(Y,\rhd_Y,1_Y)$ be pointed racks. A \emph{pointed rack homomorphism} is a map $f \colon X\rightarrow Y$such that $f(1)=1_Y$ and $f(x\rhd y)=f(x)\rhd_Y f(y)$, for every $x,y\in X$.
\end{definition}

\begin{definition}\cite{StabRack}
	A non-empty subset $Z$ of a pointed rack $(X,\rhd,1)$ is a \emph{subrack} of $X$ if $1 \in Z$ and $x \rhd y \in Z$, for every $x,y \in Z$. A subrack $Z$ of $X$ is a \emph{left} (resp.\ \emph{right}) \emph{ideal} if $x \rhd y \in Z$ (resp.\ $y \rhd x \in Z$), for any $x \in X$ and for any $y \in Z$. A subrack $Z$ of $X$ that is both a left and a right ideal is said to be a \emph{two-sided ideal}, or simply an \emph{ideal} of $X$.
\end{definition}

The basic example of a pointed rack is any group $G$ endowed with its conjugation. We denote it by $\operatorname{Conj}(G)$. In this case the unit element is the unit $1_G$ of the group $G$, $\operatorname{Conj}(G)$ is a quandle and its left ideals are precisely the normal subgroups of $G$. More in general, it is defined a functor $\operatorname{Conj}\colon \textbf{Grp}\rightarrow\textbf{PRack},$ where \textbf{Grp} is the category of groups and \textbf{PRack} is the category of pointed racks, which has a left adjoint \cite{mac2013categories} $\operatorname{As}_p\colon\textbf{PRack}\rightarrow\textbf{Grp}$ defined by $\operatorname{As_p}(X)=\operatorname{As}(X)/ \langle [1] \rangle$, where
$$
\operatorname{As}(X)=\operatorname{F}(X)/\overline{\langle \lbrace xyx^{-1} \left(x\rhd y^{-1}\right) \;\vert\; x,y\in X\rbrace \rangle},
$$
$\operatorname{F}(X)$ is the free group generated by $X$ and $\langle [1] \rangle$ is the subgroup of $\operatorname{As}(X)$ generated by the class $[1]$ of the unit element $1$ of the rack $X$. 
\begin{definition}
	A Lie rack is a pointed rack $\left(X,\rhd,1\right)$ such that $X$ is a smooth manifold, $\rhd$ is a smooth map and such that $x\rhd -$ is a diffeomorphism, for every $x \in X$.
\end{definition}

In \cite{kinyon2004leibniz} \mbox{M.\ K.\ Kinyon} showed that the tangent $\operatorname{T}_1 X$ space at the unit element of a \textit{Lie rack} $(X,\rhd,1)$, endowed with the bracket 
$$
[x,y]=\operatorname{ad}_x(y),
$$
where $\operatorname{ad}\colon\operatorname{T}_1X\rightarrow\mathfrak{gl}(\operatorname{T}_1X)$ is the differential of the map $\Phi \colon X \mapsto \operatorname{GL}(\operatorname{T}_1X)$, $\Phi(x)=\operatorname{T}_1(x \rhd -)$, for every $x \in X$, is a Leibniz algebra. 

\begin{prop}{\cite{kinyon2004leibniz}}
	Let $(X,\rhd,1)$ be a Lie rack. Then the above bracket $[x,y]=\operatorname{ad}_x(y)$, defines on $\operatorname{T}_1X$ a Leibniz algebra structure.
\end{prop}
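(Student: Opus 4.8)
The plan is to derive the left Leibniz identity by differentiating the self-distributivity law twice. For each $a \in X$ write $L_a = (a \rhd -)\colon X \to X$; since $a \rhd 1 = 1$ each $L_a$ fixes the unit, so by definition $\Phi(a) = \operatorname{T}_1 L_a \in \operatorname{GL}(\operatorname{T}_1 X)$, and $L_1 = \operatorname{id}_X$ gives $\Phi(1)=\operatorname{id}$. The first step is to rephrase self-distributivity as the operator identity $L_a \circ L_b = L_{a \rhd b} \circ L_a$ on $X$, since evaluating both sides at $c$ returns $a \rhd (b \rhd c)$ and $(a \rhd b)\rhd(a \rhd c)$. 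Differentiating this equality at the unit by the chain rule (legitimate because $L_b(1)=L_a(1)=1$) yields
\begin{equation}
\Phi(a)\,\Phi(b) = \Phi(a \rhd b)\,\Phi(a), \qquad \forall\, a,b \in X. \tag{$\ast$}
\end{equation}

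Next I would differentiate $(\ast)$ in its second variable. Fixing $a$ and a smooth curve $b(t)$ with $b(0)=1$ and $b'(0)=\eta \in \operatorname{T}_1 X$, the crucial computation is that the curve $t \mapsto a \rhd b(t)$ passes through $1$ at $t=0$ with velocity $\operatorname{T}_1 L_a(\eta) = \Phi(a)\eta$; hence $\tfrac{d}{dt}\big|_{0}\Phi(a \rhd b(t)) = \operatorname{ad}_{\Phi(a)\eta}$, using $\operatorname{ad} = \operatorname{T}_1\Phi$. Differentiating $(\ast)$ then gives, for every $a \in X$ and $\eta \in \operatorname{T}_1 X$,
\begin{equation}
\Phi(a)\,\operatorname{ad}_\eta = \operatorname{ad}_{\Phi(a)\eta}\,\Phi(a). \tag{$\ast\ast$}
\end{equation}

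Finally I would differentiate $(\ast\ast)$ in $a$ along a curve $a(s)$ with $a(0)=1$ and $a'(0)=\xi$, using $\Phi(1)=\operatorname{id}$ and $\tfrac{d}{ds}\big|_0\Phi(a(s)) = \operatorname{ad}_\xi$. The left-hand side produces $\operatorname{ad}_\xi\operatorname{ad}_\eta$. On the right-hand side the product rule gives two terms: differentiating the factor $\Phi(a(s))$ contributes $\operatorname{ad}_\eta\operatorname{ad}_\xi$, while differentiating the subscript uses the \emph{linearity} of $\operatorname{ad}$ to move the derivative inside, producing $\operatorname{ad}_{\operatorname{ad}_\xi\eta} = \operatorname{ad}_{[\xi,\eta]}$. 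This yields the operator identity $\operatorname{ad}_\xi\operatorname{ad}_\eta = \operatorname{ad}_{[\xi,\eta]} + \operatorname{ad}_\eta\operatorname{ad}_\xi$; evaluating both sides at an arbitrary $\zeta \in \operatorname{T}_1 X$ and recalling $[\,\cdot\,,\,\cdot\,] = \operatorname{ad}_{(\cdot)}(\cdot)$ gives exactly $[\xi,[\eta,\zeta]] = [[\xi,\eta],\zeta] + [\eta,[\xi,\zeta]]$. Bilinearity of the bracket is immediate, since $\operatorname{ad}=\operatorname{T}_1\Phi$ is linear in $\xi$ and each $\operatorname{ad}_\xi$ is a linear endomorphism of $\operatorname{T}_1 X$.

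The main obstacle is bookkeeping rather than conceptual: one must track carefully the base points of the successive differentials and justify each application of the chain and product rules, in particular the identity $\tfrac{d}{dt}\big|_0(a \rhd b(t)) = \Phi(a)\eta$ and the interchange of $\tfrac{d}{ds}$ with $\operatorname{ad}$ in the last step, which relies precisely on $\operatorname{ad}$ being linear. Smoothness of $\rhd$ and of $\Phi$ guarantees that all these derivatives exist and that the curves above can be chosen.
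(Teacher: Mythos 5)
Your proof is correct. The paper states this result as a citation to \cite{kinyon2004leibniz} without reproducing a proof, and your argument --- rewriting self-distributivity as $L_a\circ L_b=L_{a\rhd b}\circ L_a$, differentiating once to get $\Phi(a)\Phi(b)=\Phi(a\rhd b)\Phi(a)$, then twice more to obtain $\operatorname{ad}_\xi\operatorname{ad}_\eta=\operatorname{ad}_{[\xi,\eta]}+\operatorname{ad}_\eta\operatorname{ad}_\xi$ --- is exactly the standard derivation given in that reference, with the base-point bookkeeping handled correctly.
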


In other words, for every $x,y \in \operatorname{T}_1 X$, we have
\begin{equation*}
	\frac{\partial^2}{\partial s\partial t}\bigg|_{s,t=0} \gamma_1(s) \rhd \gamma_2(t) = [x,y],
\end{equation*}
where $\gamma_1,\gamma_2 \colon [0,1] \rightarrow X$ are smooth paths such that $\gamma_1(0)=\gamma_2(0)=1$, $\gamma_1'(0)=x$ and $\gamma_2'(0)=y$.

The converse problem, that is to find a manifold endowed with a smooth operation such that the tangent space at the distinguished point, endowed with the differential of this operation, gives a Leibniz algebra isomorphic to the given one, is known as the coquecigrue problem. M.\ K.\ Kinyon solved the \emph{coquecigrue problem} for the class of \emph{split Leibniz algebras} \cite{kinyon2004leibniz}. More recently S.~Covez in \cite{covez2013local} showed how to integrate every Leibniz algebra into a local Lie rack. The general coquecigrue problem is still open. In \cite{LM2022} the authors showed that nilpotent real Leibniz algebras admit always a global integration. They also integrated explicitly the indecomposable nilpotent Leibniz algebras with one-dimensional commutator ideal classified in the previous section.

Every left Leibniz algebra $\mathfrak{g}$ can be seen as an abelian extension of a Lie subalgebra $\mg_0\subseteq \mathfrak{gl}(\mg)$ by a $\mg_0-$module $\mathfrak{a}$ \cite{covez2013local}. For instance, one can take $\mathfrak{a}=\operatorname{Z}_l(\mg)$ and $\mg_0=\operatorname{ad}(\mg)\cong\mg/\operatorname{Z}_l(\mg)$, where $\operatorname{ad} \colon \mg \rightarrow \operatorname{gl}(\mg)$, defined by $\operatorname{ad}_x=[x,-]$ for any $x \in \mg$, is the \emph{left adjoint map}. We associate with $\mg$ the short exact sequence
	\begin{equation*}
	\begin{tikzcd}
		0\ar[r]
		&\mathfrak{a} \arrow [r, rightarrowtail]
		&\mg \arrow[r, twoheadrightarrow] &
		\mg_0 \ar[r]
		&0
	\end{tikzcd}
\end{equation*}
and we have a Leibniz algebras $2-$cocycle $\omega\colon \mg_0 \times \mg_0 \rightarrow \mathfrak{a}$ such that $\mg=\mg_0\oplus_\omega \mathfrak{a}$. Moreover the Leibniz algebra structure of $\mg$ is given by
$$
\left[\left(x,a\right),\left(y,b\right)\right]=\left(\left[x,y\right]_{\mg_0}, \rho_x(b)+\omega(x,y)\right),
$$
where $\rho\colon\mg_0\times \mathfrak{a}\rightarrow \mathfrak{a}$ is the action of $\mg_0$ on $\mathfrak{a}$. 
\begin{thm}{\cite{covez2013local}}\label{thmcovez}
	Every Leibniz algebra $\mg=\mg_0\oplus_\omega \mathfrak{a}$ can be integrated into a Lie local rack of the form 
	$$
	G_0\times_f \mathfrak{a}
	$$
	with operation defined by
	$$
	(g,a)\rhd (h,b)=\left(ghg^{-1}, \phi_g(b)+f(g,h)\right)
	$$
	and unit $(1,0)$, where $G_0$ is a Lie group such that $\operatorname{Lie}(G_0)=\mg_0$, $\phi$ is the exponentiation of the action $\rho$, $f\colon G_0\times G_0\rightarrow \mathfrak{a}$ is the Lie local racks $2-cocycle$ defined by 
	$$
	f(g,h)=\int_{\gamma_h}\left(\int_{\gamma_g}\tau^2(\omega)^{eq}\right)^{eq},\quad \forall g,h\in G_0
	$$
	and $\tau^2(\omega)\in \operatorname{ZL}^1(\mg_0,\operatorname{Hom}(\mg_0,\mathfrak{a}))$ is defined by $\tau^2(\omega)(x)(y)=\omega(x,y)$, for every $x,y\in\mg_0$.
\end{thm}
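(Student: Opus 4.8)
The statement is Covez's local integration theorem, so the plan is to follow the strategy of \cite{covez2013local}, reducing the integration of the Leibniz algebra $\mg=\mg_0\oplus_\omega\mathfrak{a}$ to purely Lie-algebraic data. First I would integrate the Lie algebra $\mg_0$ to a simply connected Lie group $G_0$ by Lie's third theorem, so that $\operatorname{Lie}(G_0)=\mg_0$, and integrate the module action $\rho\colon\mg_0\times\mathfrak{a}\to\mathfrak{a}$ to a smooth action $\phi$ of $G_0$ on the vector space $\mathfrak{a}$ by exponentiation. On the product $G_0\times\mathfrak{a}$ the ``base'' part of the proposed operation is just conjugation $(g,h)\mapsto ghg^{-1}$ in $G_0$, i.e.\ the quandle $\operatorname{Conj}(G_0)$, which already satisfies the rack axioms; the content of the theorem therefore lies entirely in constructing the $\mathfrak{a}$-valued term $f(g,h)$ and checking that the twisted operation is still a local rack.

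The heart of the argument is the integration of the Leibniz $2$-cocycle $\omega\in\operatorname{ZL}^2(\mg_0,\mathfrak{a})$ to a smooth local rack $2$-cocycle $f$. The key device is the map $\tau^2$, which reinterprets $\omega$ as a $1$-cocycle $\tau^2(\omega)\in\operatorname{ZL}^1(\mg_0,\operatorname{Hom}(\mg_0,\mathfrak{a}))$ with values in the $\mg_0$-module $\operatorname{Hom}(\mg_0,\mathfrak{a})$; this lowering of cohomological degree is what makes the closed datum integrable by the standard path-integration of left-invariant forms. I would then apply the integration procedure twice: the inner integral $\int_{\gamma_g}\tau^2(\omega)^{eq}$ integrates the equivariant left-invariant $\operatorname{Hom}(\mg_0,\mathfrak{a})$-valued form along a smooth path $\gamma_g$ from $1$ to $g$, producing a germ that still depends on the second variable, and the outer integral along $\gamma_h$ then lands in $\mathfrak{a}$ and yields $f(g,h)$. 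The equivariant superscript $(-)^{eq}$ records the correction needed so that the integrand transforms correctly under $\phi$, which is what guarantees, via Stokes' theorem and the closedness of $\tau^2(\omega)$, that the output is independent of the chosen paths on a neighbourhood of the identity.

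With $f$ in hand, I would verify that $(g,a)\rhd(h,b)=(ghg^{-1},\phi_g(b)+f(g,h))$ defines a pointed local rack with unit $(1,0)$. The unit axioms follow from $\phi_1=\operatorname{id}$, from $f(1,h)=f(g,1)=0$, and from the corresponding properties of conjugation; invertibility of $(g,a)\rhd-$ is immediate from the invertibility of $\phi_g$ and of conjugation. The self-distributive identity reduces, after projection to the $G_0$-component (where it holds because conjugation is a quandle operation), to a single identity for $f$ in the $\mathfrak{a}$-component, and this is exactly the local rack $2$-cocycle condition that the double integration of $\tau^2(\omega)$ is designed to satisfy. Finally, differentiating $\rhd$ twice at the unit, as in the mixed-partial formula recalled before Proposition~\ref{thmcovez}, recovers the bracket $[(x,a),(y,b)]=([x,y]_{\mg_0},\rho_x(b)+\omega(x,y))$, confirming that $\operatorname{T}_{(1,0)}(G_0\times_f\mathfrak{a})\cong\mg$ as Leibniz algebras.

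The main obstacle is the second step: proving that the iterated path-integral is well defined on a neighbourhood of $1$ and that the resulting $f$ genuinely satisfies the rack cocycle identity, not merely its infinitesimal version. This is where the equivariant integration and the compatibility between the Leibniz cohomology differential and the local-rack cochain differential must be established, and it is the technical core of \cite{covez2013local}; once this compatibility is in place, the remaining verifications of the rack axioms and of the tangent Leibniz structure are essentially bookkeeping.
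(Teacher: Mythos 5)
This theorem is imported verbatim from Covez's work and the paper offers no proof of its own (it only cites \cite{covez2013local}), so there is no internal argument to compare against; your outline faithfully reproduces the strategy of that reference — integrate $\mg_0$ and the action $\rho$, lower the degree of $\omega$ via $\tau^2$, build $f$ by the double equivariant path-integration, then check the rack axioms and recover the bracket by differentiating twice at the unit. As a plan it is correct, and you rightly flag the genuine technical core (local path-independence of the iterated integral and the compatibility between the Leibniz coboundary and the local-rack cochain differential) as the part that must be taken from, or reconstructed following, Covez — which is exactly the level at which the paper itself treats this statement.
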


In the case that a Lie rack integrating a Leibniz algebra $\mg$ is a quandle, we have the following.

\begin{thm}\cite{LM2022}\label{thmquandle}
	Let $X$ be a Lie rack integrating a Leibniz algebra $\mg$. $X$ is a quandle if and only if $\mg$ is a Lie algebra. Moreover $X=\operatorname{Conj}(G)$, where $\operatorname{Lie}(G)=\mg$ and $\operatorname{Lie}\colon \mathbf{LieGrp} \rightarrow \mathbf{Lie}$ is the functor which sends every real Lie group $G$ to its Lie algebra $\operatorname{Lie}(G)$.
\end{thm}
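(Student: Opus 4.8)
The plan is to prove the two implications of the equivalence separately and then extract the structural identification $X=\operatorname{Conj}(G)$. First I would record the reduction that makes everything testable on the ``diagonal'': over $\R$ a Leibniz algebra is a Lie algebra exactly when its bracket is skew-symmetric, and since $\operatorname{char}(\F)\neq 2$ this holds if and only if $[v,v]=0$ for every $v\in\mg$ (equivalently $\operatorname{Leib}(\mg)=0$). The point is that both the quandle axiom $x\rhd x=x$ and the condition $[v,v]=0$ are diagonal conditions, which is what lets Kinyon's second-order formula connect them.

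For the implication ``$X$ quandle $\Rightarrow\mg$ Lie'', I would fix $v\in\operatorname{T}_1X$, choose a smooth path $\gamma$ with $\gamma(0)=1$ and $\gamma'(0)=v$, work in a chart around $1$, and study the smooth map $F(s,t)=\gamma(s)\rhd\gamma(t)$. The pointed axioms give $F(s,0)=\gamma(s)\rhd 1=1$ and $F(0,t)=1\rhd\gamma(t)=\gamma(t)$, so that $\partial_{ss}F(0,0)=0$ and $\partial_{tt}F(0,0)=\gamma''(0)$, while by the bracket formula $\partial_{st}F(0,0)=[v,v]$. Differentiating the quandle identity $F(s,s)=\gamma(s)\rhd\gamma(s)=\gamma(s)$ twice in $s$ and evaluating at $0$ yields
\[
\partial_{ss}F(0,0)+2\,\partial_{st}F(0,0)+\partial_{tt}F(0,0)=\gamma''(0),
\]
that is $2[v,v]=0$, hence $[v,v]=0$ for all $v$ and $\mg$ is a Lie algebra. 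Here the chart-dependence of $\gamma''(0)$ is harmless because that term cancels on both sides.

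For the converse, assume $\mg$ is a Lie algebra. By Lie's third theorem there is a connected, simply connected Lie group $G$ with $\operatorname{Lie}(G)=\mg$, and I would verify that $\operatorname{Conj}(G)$ is a pointed Lie rack integrating $\mg$: the map $\Phi(g)=\operatorname{T}_1(g\rhd-)$ is $g\mapsto\operatorname{Ad}_g$, whose differential is $\operatorname{ad}$, so the induced bracket $[x,y]=\operatorname{ad}_x(y)$ is precisely the Lie bracket of $\mg$. Moreover $g\rhd g=g g g^{-1}=g$, so $\operatorname{Conj}(G)$ is a quandle. It then remains to identify the given integrating rack $X$ with $\operatorname{Conj}(G)$, which simultaneously establishes the ``moreover'' clause.

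The hard part will be exactly this last identification, since the coquecigrue problem has no uniqueness in general. I would resolve it using the explicit global integration of nilpotent real Leibniz algebras from \cite{LM2022} together with Covez's local formula (Theorem \ref{thmcovez}): writing $\mg=\mg_0\oplus_\omega\mathfrak{a}$ with $\mathfrak{a}=\operatorname{Z}_l(\mg)$ and $\mg_0=\mg/\operatorname{Z}_l(\mg)$, the operation is $(g,a)\rhd(h,b)=(ghg^{-1},\phi_g(b)+f(g,h))$, and the quandle condition $(g,a)\rhd(g,a)=(g,a)$ reduces, by linearity in $a$, to $\phi_g=\operatorname{id}$ and $f(g,g)=0$. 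When $\mg$ is a Lie algebra, $\operatorname{Z}_l(\mg)$ is the centre, so the action $\rho$ is trivial and $\phi_g=\operatorname{id}$; and the defining cocycle $\omega$ is antisymmetric, so $\omega(x,x)=0$, which I expect forces $f(g,g)=0$ through the double integral defining $f$. Since the extension then splits with trivial action, the integrating rack is the conjugation rack of the central extension $G$ of $G_0$ with $\operatorname{Lie}(G)=\mg$, i.e.\ $X=\operatorname{Conj}(G)$, completing the proof.
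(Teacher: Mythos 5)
First, note that the paper does not actually prove this statement: Theorem~\ref{thmquandle} is quoted from \cite{LM2022}, so your attempt can only be measured against the argument one would expect there. Your forward implication (quandle $\Rightarrow$ Lie) is correct and is essentially the standard argument: the second derivative of the diagonal identity $\gamma(s)\rhd\gamma(s)=\gamma(s)$, combined with $F(s,0)\equiv 1$ and $F(0,t)=\gamma(t)$, isolates the mixed partial and gives $2[v,v]=0$; the chart-dependent terms do cancel as you say, and polarization in characteristic $0$ then yields antisymmetry, hence a Lie algebra. This part I would accept as written.

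The converse and the ``moreover'' clause contain genuine gaps. You correctly reduce the quandle condition on the Covez rack $(g,a)\rhd(h,b)=(ghg^{-1},\phi_g(b)+f(g,h))$ to $\phi_g=\operatorname{id}$ and $f(g,g)=0$, and triviality of the action is fine since $\mathfrak{a}=\operatorname{Z}_l(\mg)=\operatorname{Z}(\mg)$ for a Lie algebra. But the step ``$\omega(x,x)=0$, which I expect forces $f(g,g)=0$ through the double integral'' is exactly the point that needs proof: $f$ is defined by an iterated equivariant integral of $\tau^2(\omega)$ along paths $\gamma_g,\gamma_h$, and vanishing on the diagonal is not a formal consequence of antisymmetry of $\omega$ without actually analysing that integral (this is the content of Covez's computation in the Lie case, see \cite{covez2013local}). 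More seriously, the identification $X=\operatorname{Conj}(G)$ is asserted rather than proved: knowing $f(g,g)=0$ gives the quandle property but not the equality of racks; you would need to show that $f(g,h)$ coincides with the conjugation cocycle $b\mapsto b+c(g,h)+c(gh,g^{-1})-c(g,g^{-1})$ of the central extension $G=G_0\times_c\mathfrak{a}$ whose Lie algebra cocycle is $\omega$. Your phrase ``the extension then splits with trivial action'' is also wrong as stated: a central extension of $\mg_0$ by $\operatorname{Z}(\mg)$ need not split (the Heisenberg algebra itself is the basic non-split example), and nothing in the argument requires or uses splitting. Finally, be explicit that the converse direction only makes sense for the rack produced by the integration procedure of Theorem~\ref{thm2}/Theorem~\ref{thmcovez}; for an arbitrary Lie rack with prescribed tangent Leibniz algebra the ``if'' direction would be false as stated.
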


For the class of nilpotent Leibniz algebras, the integration proposed by S.~Covez is global. 

\begin{thm}\label{thmnil} \cite{LM2022}
	Every nilpotent  real Leibniz algebra $\mg$ has a global integration into a Lie rack.
\end{thm}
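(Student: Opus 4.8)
The plan is to upgrade the \emph{local} integration of Theorem~\ref{thmcovez} to a global one by exploiting nilpotency at every stage of Covez's construction. Write $\mathfrak{a}=\operatorname{Z}_l(\mg)$ and $\mg_0=\operatorname{ad}(\mg)\cong\mg/\operatorname{Z}_l(\mg)$, so that $\mg=\mg_0\oplus_\omega\mathfrak{a}$ with $2$-cocycle $\omega$ and action $\rho$ as in the short exact sequence above. Since $\mg$ is nilpotent, each $\operatorname{ad}_x$ is a nilpotent endomorphism, whence $\mg_0\subseteq\mathfrak{gl}(\mg)$ is a nilpotent Lie algebra and $\rho$ is a representation by nilpotent operators. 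This polynomiality is the feature I would use throughout.

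First I would realize the base group and the action globally. Let $G_0$ be the simply connected Lie group with $\operatorname{Lie}(G_0)=\mg_0$ and set $m=\dim_\R\mg_0$. Because $\mg_0$ is nilpotent, the exponential map $\exp\colon\mg_0\to G_0$ is a global diffeomorphism, so $G_0$ is diffeomorphic to $\R^m$ and, via the Baker--Campbell--Hausdorff series (which terminates after finitely many brackets), the multiplication and the conjugation $(g,h)\mapsto ghg^{-1}$ are \emph{polynomial} maps in exponential coordinates; thus the first coordinate of the candidate rack operation is globally defined and smooth. Likewise, $\rho$ integrates to a Lie group homomorphism $\phi\colon G_0\to\operatorname{GL}(\mathfrak{a})$, and since $\rho$ takes values in nilpotent operators, $\phi_g=\exp\bigl(\rho(\log g)\bigr)$ is a finite polynomial expression in the coordinates of $g$, hence globally defined. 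This handles the $\phi_g(b)$ term.

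The main obstacle is the globality of Covez's cocycle
$$
f(g,h)=\int_{\gamma_h}\Bigl(\int_{\gamma_g}\tau^2(\omega)^{eq}\Bigr)^{eq},
$$
which a priori is only defined near the unit. I would argue that, in exponential coordinates, nilpotency forces every datum entering the integrand --- the equivariant extensions of $\tau^2(\omega)$, assembled from $\omega$ and $\phi$ --- to be \emph{polynomial}. The iterated integrals are therefore taken along smooth paths in $G_0\cong\R^m$ against polynomial differential forms, so each inner and outer integral converges; crucially, each is \emph{path-independent}, since the relevant equivariant forms are closed (they come from Leibniz cochains) and $G_0$ is simply connected, so the Poincaré lemma removes any monodromy obstruction and the primitives obtained by integration are single-valued on all of $G_0$. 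This yields a globally defined smooth cocycle $f\colon G_0\times G_0\to\mathfrak{a}$. I expect this path-independence to be the genuinely delicate point; everything else follows from the nilpotency-driven polynomiality of the construction.

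Finally, with $G_0$, $\phi$ and $f$ all globally defined, the set $G_0\times_f\mathfrak{a}$ with operation
$$
(g,a)\rhd(h,b)=\bigl(ghg^{-1},\,\phi_g(b)+f(g,h)\bigr)
$$
and unit $(1,0)$ is a genuine smooth manifold equipped with a smooth binary operation, agreeing with Covez's local rack near the unit. The rack identities --- left self-distributivity and the bijectivity of each $(g,a)\rhd-$ --- are then equalities of analytic maps that hold on a neighbourhood of the unit; by the identity theorem for analytic maps on the connected analytic manifold $G_0\times_f\mathfrak{a}$, they extend to the whole of $G_0\times_f\mathfrak{a}$. Hence $G_0\times_f\mathfrak{a}$ is a global Lie rack integrating $\mg$, which is what we wanted to prove.
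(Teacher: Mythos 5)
Your proposal is essentially the intended argument: the paper does not actually prove this statement (it is imported from \cite{LM2022}), but its one-line gloss --- that for nilpotent Leibniz algebras ``the integration proposed by S.~Covez is global'' --- is exactly the route you take, namely that nilpotency of $\mg$ makes $\mg_0=\mg/\operatorname{Z}_l(\mg)$ a nilpotent Lie algebra whose simply connected group $G_0$ has a globally polynomial exponential chart, so that the conjugation, the exponentiated action $\phi$, and the iterated-integral cocycle $f$ of Theorem~\ref{thmcovez} are all globally defined and the rack identities propagate from a neighbourhood of the unit by analyticity. The only remark worth adding is that for the two-step nilpotent algebras the paper actually uses, it bypasses the cocycle integration entirely via the explicit formula $x\rhd y=y+[x,y]$ of Theorem~\ref{thm2}, which gives the global rack directly.
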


When $\mg$ is a two-step nilpotent algebra, a Lie rack integrating $\mg$ can be defined without integrating the Leibniz algebras $2-$cocycle associated with $\mg$, since $[\mg,\mg] \subseteq \operatorname{Z}_l(\mg)$.

\begin{thm} \label{thm2} \cite{LM2022}
	Let $\mg$ be a two-step nilpotent algebra, let $\mg_0=\mg / \left[\mg,\mg\right]$ and let $\omega \colon \mg_0 \times \mg_0 \rightarrow \left[\mg,\mg\right]$ be the Leibniz algebras $2-$cocycle associated with the short exact sequence
	\begin{equation*}
		\begin{tikzcd}
			0\ar[r]
			& \left[\mg,\mg\right] \arrow[r, hookrightarrow]
			&\mg \arrow[r, twoheadrightarrow] &
			\mg_0 \ar[r]
			&0.
		\end{tikzcd}
	\end{equation*}
	Then the multiplication
	$$
	\left(x,a\right)\rhd\left(y,b\right)=(y,b)+[(x,a),(y,b)]=\left(y,b+\omega(x,y)\right)
	$$
	defines a Lie rack structure on the vector space $\mg_0\times \left[\mg,\mg\right]$, such that $(0,0)$ is the unit element and $\operatorname{T}_{(0,0)}(\mg_0\times \left[\mg,\mg\right], \rhd)$ is a Leibniz algebra isomorphic to $\mg$.
\end{thm}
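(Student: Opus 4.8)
The plan is to verify directly that $\rhd$ satisfies the pointed-rack axioms, then to compute the Kinyon bracket at $(0,0)$ and match it with the bracket of $\mg$. Throughout I would exploit the two features that make the two-step nilpotent case special: by the earlier Proposition, $\left[\mg,\mg\right] \subseteq \operatorname{Z}(\mg)$, and the quotient $\mg_0 = \mg/\left[\mg,\mg\right]$ is abelian. Fix a linear section $s \colon \mg_0 \to \mg$ of the projection, so that the associated cocycle is $\omega(x,y) = [s(x),s(y)]_{\mg}$; since any two sections differ by a linear map into the central ideal $\left[\mg,\mg\right]$, this $\omega$ is independent of the choice of $s$, and it is bilinear because the bracket is.

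First I would check that $(0,0)$ is a unit: bilinearity gives $\omega(0,y) = \omega(y,0) = 0$, whence $(0,0)\rhd(y,b) = (y,b)$ and $(y,b)\rhd(0,0) = (0,0)$. Next, each left translation $L_{(x,a)}\colon (y,b)\mapsto (y,\,b+\omega(x,y))$ is affine with explicit inverse $(y,b)\mapsto(y,\,b-\omega(x,y))$, hence bijective. For left self-distributivity I would expand both sides: both $(x,a)\rhd\bigl((y,b)\rhd(z,c)\bigr)$ and $\bigl((x,a)\rhd(y,b)\bigr)\rhd\bigl((x,a)\rhd(z,c)\bigr)$ reduce to $\bigl(z,\,c+\omega(x,z)+\omega(y,z)\bigr)$, so the axiom holds. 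Notice this uses only bilinearity of $\omega$, not its cocycle identity. Since $\mg_0\times\left[\mg,\mg\right]$ is a finite-dimensional real vector space, hence a smooth manifold, $\rhd$ is polynomial, and each $L_{(x,a)}$ is an affine diffeomorphism, $\bigl(\mg_0\times\left[\mg,\mg\right],\rhd,(0,0)\bigr)$ is a Lie rack.

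It then remains to identify the tangent Leibniz algebra. Identifying $\operatorname{T}_{(0,0)}\bigl(\mg_0\times\left[\mg,\mg\right]\bigr)$ canonically with $\mg_0\times\left[\mg,\mg\right]$, I would evaluate the bracket through the second-derivative formula using the linear paths $\gamma_1(s)=(sx,sa)$ and $\gamma_2(t)=(ty,tb)$. Since $\gamma_1(s)\rhd\gamma_2(t)=\bigl(ty,\,tb+st\,\omega(x,y)\bigr)$, applying $\partial^2/\partial s\,\partial t$ at $s=t=0$ yields $[(x,a),(y,b)]=(0,\omega(x,y))$. Finally I would show that the linear isomorphism $\Psi\colon(x,a)\mapsto s(x)+a$ intertwines this bracket with that of $\mg$: expanding $[s(x)+a,\,s(y)+b]_{\mg}$ bilinearly and using that $a,b\in\left[\mg,\mg\right]\subseteq\operatorname{Z}(\mg)$ annihilates the three cross terms, leaving $[s(x),s(y)]_{\mg}=\omega(x,y)$, which is exactly $\Psi(0,\omega(x,y))$. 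Hence $\Psi$ is a Leibniz algebra isomorphism, proving the claim.

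All the computations are elementary; the conceptual crux, rather than any single hard step, is that the centrality of $\left[\mg,\mg\right]$ does triple duty: it trivializes the module action $\rho$ so that the rack product collapses to the stated closed form $\left(y,\,b+\omega(x,y)\right)$, it makes $\omega$ section-independent, and it kills the cross terms when reconstructing the bracket of $\mg$. I expect the only genuine technical friction to be the careful identification of the tangent space and the justification that linear paths through $(0,0)$ suffice to recover the second-order Kinyon bracket.
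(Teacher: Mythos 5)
Your proof is correct and follows the natural direct route: verify the pointed-rack axioms from bilinearity of $\omega$, compute the Kinyon bracket via the mixed second derivative along linear paths, and use a section $s$ together with the centrality of $\left[\mg,\mg\right]$ to identify the resulting bracket with that of $\mg$. The paper itself states Theorem \ref{thm2} as a citation from \cite{LM2022} without reproducing a proof, but your argument matches the strategy indicated there (exploiting $\left[\mg,\mg\right]\subseteq\operatorname{Z}(\mg)$ to avoid integrating the cocycle), and I see no gaps.
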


Now we recall how to integrate the indecomposable nilpotent real Leibniz algebras with one-dimensional commutator ideal listed in the previous section. We suppose that $A \in \operatorname{M}_n(\mathbb{R})$ is the companion matrix of the power of an irreducible monic polynomial $f(x) \in \mathbb{R}[x]$. Thus $A$ is similar to the $n \times n$ Jordan block of eigenvalue $a \in \mathbb{R}$ or $A$ is similar to $J_R$, where $R \in \operatorname{M}_2(\mathbb{R})$ is the realification of some complex number $z=\alpha + i \beta$.

\begin{ex}\label{exHeis}
	Let $\mg=\mathfrak{l}_{2n+1}^A$ and let $\left\{e_1,\ldots,e_n,f_1,\ldots,f_n,h\right\}$ be a basis of $\mg$. The Lie global rack integrating $\mg$ is $R_{2n+1}^{A}=(\mg_0\times \mathbb{R}h,\rhd)$ with multiplication 
	\begin{gather*}
		(x_1,\ldots,x_n,y_1,\ldots,y_n,z)\rhd (x_1',\ldots,x_n',y_1',\ldots,y_n',z')=\\ =\left(x_1',\ldots,x_n',y_1',\ldots,y_n',z'+\sum_{i,j=1}^{n}\left[\left(\delta_{ij}+a_{ij}\right)x_iy_j'+\left(-\delta_{ij}+a_{ij}\right)x_i'y_j\right]\right).
	\end{gather*}
\end{ex}

\begin{definition}\cite{LM2022}
	$(R_{2n+1}^{A},\rhd)$ is called the \emph{Heisenberg rack}.
\end{definition}

When $A$ is the zero matrix, i.e.\	 when $\mg=\mh_{2n+1}$ is the $(2n+1)-$dimensional Heisenberg Lie algebra, we have that $R_{2n+1}^{0_{n\times n}}=\operatorname{Conj}(H_{2n+1})$.

In the same way, we can find the Lie racks integrating the \emph{Kronecker} algebra and the \emph{Dieudonné} algebra.

	\begin{definition}\cite{LM2022}
	Let $\mg=\mathfrak{k}_{n}$ and let $\left\{e_1,\ldots,e_n,f_1,\ldots,f_n,h\right\}$ be a basis of $\mg$. The \emph{Kronecker} rack $K_n=(\mg_0\times \mathbb{R}h,\rhd)$ is the Lie rack with multiplication 
	\begin{align*}
		x \rhd y = y + [x,y]_{\mathfrak{k}_n}, \quad \forall x,y \in K_n.
	\end{align*}
\end{definition}

\begin{definition}\cite{LM2022}
	Let $\mg=\mathfrak{d}_{n}$ and let $\left\{e_1,\ldots,e_{2n+1},h\right\}$ be a basis of $\mg$. The \emph{Dieudonn\'e} rack $D_n=(\mg_0\times \mathbb{R}h,\rhd)$ is the Lie rack with multiplication
	\begin{align*}
		x \rhd y = y + [x,y]_{\mathfrak{d}_n}, \quad \forall x,y \in D_n.
	\end{align*}
\end{definition}

One can check that $\operatorname{T}_{(0,0)} K_n \cong \mathfrak{k}_n$ and $\operatorname{T}_{(0,0)} D_n \cong \mathfrak{d}_n$.

\section{Isotopisms}

The concept of isotopism between two algebraic structures was introduced in~\cite{Albert} by A.\ A.\ Albert (see also \cite{Albert2} and \cite{IsotopismStory}) in order to classify non-associative algebras by generalizing the notion of isomorphism. We define here isotopisms for Leibniz algebras and racks.
	\begin{definition}\label{isotopismLeib}
		Let $(\mg,[-,-])$ and $(\mathfrak{h},[-,-]_\mh)$ be left Leibniz algebras over $\F$. An \emph{isotopism} between $\mg$ and $\mathfrak{h}$ is a triple of linear isomorphisms
		$$
		(f,g,h)\colon \mg \leftrightarrows \mathfrak{h}
		$$
		such that
		$$
		[f(x),g(y)]_{\mathfrak{h}}=h([x,y]), \quad \forall x,y \in \mg.
		$$
		We say that $\mg$ and $\mh$ are \emph{isotopic} Leibniz algebras.
	\end{definition}

An isotopism is said to be a \emph{right isotopism} if $f=h$ and a \emph{left isotopism} if $g=h$. We observe that, if $f \colon \mg \leftrightarrows \mathfrak{h}$ is an isomorphism of Leibniz algebras, then the triple $(f,f,f)$ is a (left and right) isotopism.

	\begin{definition}
		Let $(X,\rhd,1)$ and $(Y,\rhd_Y,1_Y)$ be left pointed racks. An \emph{isotopism} between $X$ and $Y$ is a triple of bijections
		$$
		(f,g,h) \colon X \leftrightarrows Y
		$$
		such that $f(1)=g(1)=h(1)=1_Y$ and
		$$
		f(x) \rhd_Y g(y)=h(x \rhd y), \quad \forall x,y \in X.
		$$
	\end{definition}

Also in this case, every isomorphism of racks can be seen as a left and right isotopism. Moreover, one can define the notion of \emph{Lie rack isotopism} by asking that $f,g,h$ are diffeomorphisms.

The following is the generalization of a result proved by A.\ A.\ Albert in \cite{Albert2} for isotopisms of \emph{quasigroups}.

\begin{rem}
	Every isotopism $(f,g,h)$ between two algebraic structures is isomorphic to a \emph{principal isotopism}, i.e.\ an isotopism of the form $(\tilde{f},\tilde{g},\operatorname{id})$.  To be more precise, let $(A,\cdot)$ and $(B,\circ)$ be two algebraic structures and let $(f,g,h)\colon A \leftrightarrows B$ be an isotopism. We define a binary operation $*\colon B \times B \rightarrow B$ by
	$$
	h(x) * h(y) = h(xy), \quad \forall x,y \in A.
	$$
	In this way $(A,\cdot)$ and $(B,*)$ are isomorphic and $h(x)*h(y)=f(x) \circ g(y)$. Thus $(hf^{-1})(\bar{x})*(hg^{-1})(\bar{y})=\bar{x} \circ \bar{y}$, where $\bar{x}=f(x)$, $\bar{y}=g(y)$, and we obtain the \textbf{principal isotopism} $(fh^{-1},gh^{-1},\operatorname{id}_B)\colon (B,\circ) \leftrightarrows (B,*)$.
	\[
	\begin{tikzcd}[row sep=large, column sep= 2cm]
		(B,\circ) \arrow{r}{(fh^{-1},gh^{-1},\operatorname{id}_B)}  & (B,*)\\
		(A,\cdot) \arrow{u}{(f,g,h)}  \arrow[dashed, swap]{ru}{h}&
	\end{tikzcd}
	\]
\end{rem}

In the case that $(A,\cdot)$ and $(B,\circ)$ are finite dimensional non-associative algebras over a field $\F$, such as Leibniz algebras, we can identify the underlying vector space of $A$ and $B$ as $\F^n$, where $n=\dim_\F A=\dim_\F B$, and $\operatorname{id}_B=\operatorname{id}_{\F^n}$.

From Definition \ref{isotopismLeib}, it turns out that a difference between isomorphisms and isotopisms is that one can find a Lie algebra that is isotopic to a Leibniz non-Lie algebra, as the following example shows.

	\begin{ex}
		Let $\mathfrak{h}_3$ be the three-dimensional Heisenberg Lie algebra, let $A=(a_{ij})_{i,j} \in \operatorname{GL}_2(\F)$, $B=(b_{ij})_{i,j}=A \cdot \operatorname{diag}\lbrace \lambda, \mu \rbrace$, with $\lambda,\mu \in \F^*$ and let $\mg_{\lambda,\mu}^{A}$ be the nilpotent Leibniz algebra with one-dimensional commutator ideal with structure matrix
		$$
		\begin{pmatrix}
			0 & a_{11} b_{22} - a_{21}b_{12} & 0 \\
			a_{12}b_{21} - a_{22}b_{11} & 0 & 0 \\
			0 & 0 & 0 \\
		\end{pmatrix}.
		$$
		Then the triple of linear isomorphisms $(f,g,\operatorname{id}_{\mathbb{F}^3})\colon \mg_{\lambda,\mu}^{A} \leftrightarrows \mathfrak{h}_3$, where $f$ and $g$ are defined respectively by the matrices
		\[
		\begin{pmatrix}
			a_{11} & a_{21} & 0 \\
			a_{12} & a_{22} & 0 \\
			0 & 0 & \alpha \\
		\end{pmatrix}\text{ and }
	\begin{pmatrix}
		b_{11} & b_{21} & 0 \\
		b_{12} & b_{22} & 0 \\
		0 & 0 & \beta \\
	\end{pmatrix}, 
\text{ with } \alpha,\beta\in\mathbb{F}^\ast
	\]
		is a principal isotopism. Indeed, for every $x\equiv(x_1,x_2,x_3), y\equiv(y_1,y_2,y_3)\in \F^3$, we have
		\begin{align*}
			\left[f(x), g(y)\right]_{\mathfrak{h}_3}=& (0,0, (a_{11}x_1+a_{12}x_2)(b_{21}y_1+b_{22}y_2)-(a_{21}x_1+a_{22}x_2)(b_{11}y_1+b_{12}y_2))\\
			=& (0,0,(a_{11}b_{22}-a_{21}b_{12})x_1y_2-(a_{12}b_{21}-a_{22}b_{11})x_2y_1)=\left[x,y\right]_{\mg_{\lambda,\mu}^{A}},
		\end{align*}
		since $a_{11}b_{21}-b_{11}a_{21}=a_{12}b_{22}-b_{12}a_{22}=0$.
	\end{ex}
	
		\noindent One can check that, if $\lambda=\dfrac{2}{\det(A)}-\mu$, then $\mg_{\lambda,\mu}^{A}=\mathfrak{l}_3^{\alpha}$, where $\alpha=\mu\det(A)-1$. Indeed $\mg_{\lambda,\mu}^{A}$ and $\mathfrak{l}_3^{\alpha}$ have the same structure matrix, that is
		\[
		\begin{pmatrix}
			0 & \mu\det(A) & 0\\
			-\lambda\det(A) & 0 & 0\\
			0 & 0 & 0
		\end{pmatrix}=
		\begin{pmatrix}
				0 & 1+\alpha & 0\\
		-1+\alpha & 0 & 0\\
			0 & 0 & 0
		\end{pmatrix}.\] 
	     Conversely, for every $\alpha \in \F^{\ast} \setminus \lbrace \pm 1 \rbrace$, if we choose $A=\operatorname{I}_2$, $\lambda=1-\alpha$ and $\mu=1+\alpha$, then $\mg_{\lambda,\mu}^{\operatorname{I}_2}=\mathfrak{l}_3^{\alpha}$. Thus $\mathfrak{h}_{3}$ and $\mathfrak{l}_3^{\alpha}$ are isotopic for every $\alpha \in \F$, $\alpha \neq \pm 1$. We note that this result is not true if $\alpha=\pm 1$, in fact in the next section we will show that $\mathfrak{l}_3^{\pm 1}$ and $\mathfrak{h}_3$ are not isotopic.
	     
	     In a similar way, if $\mu=-\lambda=\dfrac{1}{\operatorname{det}(A)}$, then the structure matrix of $\mg_{\lambda,\mu}^A$ is 
		$$
		\begin{pmatrix}
			0 & 1 & 0 \\
			1 & 0 & 0 \\
			0 & 0 & 0 \\
		\end{pmatrix},
		$$
		i.e.\ $\mg_{\lambda,\mu}^A=\mathfrak{k}_1$ is the Kronecker algebra. Thus $\mathfrak{l}_3^{\alpha}$ is isotopic to $\mathfrak{k}_1$, for every $\alpha \neq \pm 1$.
		
\section{Isotopisms of two-step nilpotent algebras and Lie racks}\label{lastsection}
	
		The main goal of this section is to investigate the isotopism classes of indecomposable nilpotent Leibniz algebras with one-dimensional commutator ideal over a field $\F$, with $\operatorname{char}(\F) \neq 2$. Moreover we will show that, when $\F=\R$, two Leibniz algebras of this type are isotopic if and only if the Lie racks integrating them are isotopic. For doing this, we need to find new isotopism invariants for Leibniz algebras and Lie racks.
		
		We start with the following definition, which was given by M.\ Elhamdadi and E.\ M.\ Moutuou in \cite{StabRack}.
		
		\begin{definition}
			Let $X$ be a left rack. The \emph{center} of $X$ is
			$$
			\operatorname{Z}(X)=\lbrace x \in X \; | \; x \rhd y = y, \; \forall y \in X \rbrace.
			$$
		\end{definition}

\noindent We observe that $\operatorname{Z}(X)$ is a subrack of $X$ since $1 \rhd y=y$ and
$$
(x \rhd x') \rhd y = x' \rhd y = y
$$
for every $x,x' \in \operatorname{Z}(X)$ and for every $y \in X$. In \cite{StabRack} the center of $X$ is called the \emph{stabilizer} of $X$ and it is denoted by $\operatorname{Stab}(X)$. We refer to it as the center for the following reasons:
\begin{enumerate}
 \item[(1)] if $G$ is a group, then $\operatorname{Z}(\operatorname{Conj}(G))=\operatorname{Z}(G)$;
 \item[(2)] if $X$ is a Lie rack and $\mg = \operatorname{T}_1 X$, then $\operatorname{T}_1 \operatorname{Z}(X) = \operatorname{Z}_l(\mg)$.
\end{enumerate}
	
	\begin{prop}{\ } \label{invariants}
		\begin{itemize}
			\item[(i)] Let $\mg,\mathfrak{h}$ be Leibniz algebras and let $(f,g,h)\colon \mg \leftrightarrows \mathfrak{h}$ be an isotopism. Then
			$$
			f(\operatorname{Z}_l(\mg)) = \operatorname{Z}_l(\mathfrak{h}), \quad g(\operatorname{Z}_r(\mg)) = \operatorname{Z}_r(\mathfrak{h}), \quad h([\mg,\mg])=[\mh,\mh]
			$$
			and the dimensions of the left center, of the right center and of the commutator ideal are \emph{isotopism invariants}.
			\item[(ii)] Let $X,Y$ be left racks and let $(f,g,g)\colon X \leftrightarrows Y$ be a \emph{left isotopism}. Then
			$$
			f(\operatorname{Z}(X)) = \operatorname{Z}(Y).
			$$
			In the case of Lie racks, the dimension of the center is a \emph{left isotopism inviariant}.
		
		\end{itemize}
		
	\end{prop}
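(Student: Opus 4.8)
The plan is to verify each containment directly from the defining identities, exploiting the fact that $f,g,h$ (resp.\ $f,g$) are \emph{bijections}, so that set equalities follow from mutual inclusions together with a dimension count. For part (i), start with the commutator ideal, since it is the cleanest. Because $[f(x),g(y)]_{\mathfrak h}=h([x,y])$ holds for all $x,y\in\mg$ and $f,g$ are surjective, every generator $[u,v]_{\mathfrak h}$ of $[\mh,\mh]$ equals $h([f^{-1}(u),g^{-1}(v)])\in h([\mg,\mg])$; conversely $h([\mg,\mg])\subseteq[\mh,\mh]$ by the same identity read from left to right. Since $h$ is linear and bijective it maps the span to the span, giving $h([\mg,\mg])=[\mh,\mh]$ and in particular $\dim[\mg,\mg]=\dim[\mh,\mh]$.

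Next I would treat the left center. If $x\in\operatorname{Z}_l(\mg)$, then $[x,y]=0$ for all $y$, so $h([x,y])=0$, i.e.\ $[f(x),g(y)]_{\mathfrak h}=0$ for all $y$. As $g$ is surjective, $g(y)$ ranges over all of $\mh$, whence $f(x)\in\operatorname{Z}_l(\mh)$; this gives $f(\operatorname{Z}_l(\mg))\subseteq\operatorname{Z}_l(\mh)$. The reverse inclusion is obtained symmetrically: if $w\in\operatorname{Z}_l(\mh)$, write $w=f(x)$ and run the computation backwards, using that $h$ is injective to conclude $[x,y]=0$ for all $y$, hence $x\in\operatorname{Z}_l(\mg)$ and $w\in f(\operatorname{Z}_l(\mg))$. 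The argument for the right center is identical after swapping the roles of $f$ and $g$: one fixes $y\in\operatorname{Z}_r(\mg)$, uses surjectivity of $f$ to let $f(x)$ sweep out $\mh$, and concludes $g(\operatorname{Z}_r(\mg))=\operatorname{Z}_r(\mh)$. Equality of dimensions is then immediate since $f,g$ are linear isomorphisms.

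For part (ii), the scheme is the same but now carried out at the level of the rack operation, and the extra hypothesis $h=g$ is precisely what is needed to make the center behave well. If $x\in\operatorname{Z}(X)$, then $x\rhd y=y$ for all $y$; applying the isotopism identity $f(x)\rhd_Y g(y)=g(x\rhd y)=g(y)$ shows that $f(x)\rhd_Y g(y)=g(y)$, and since $g$ is a bijection $g(y)$ ranges over all of $Y$, so $f(x)\in\operatorname{Z}(Y)$. The reverse inclusion follows by writing an element of $\operatorname{Z}(Y)$ as $f(x)$ and reading the identity backwards, using injectivity of $g$ to cancel. This gives $f(\operatorname{Z}(X))=\operatorname{Z}(Y)$; in the smooth case $f$ is a diffeomorphism, so the dimension of the center is a left isotopism invariant.

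The only genuinely delicate point is bookkeeping with the three different maps: one must be careful to use surjectivity of the \emph{correct} map to let the free variable range over the whole target, and injectivity of $h$ (resp.\ $g$ in part (ii)) to pull conclusions back from $\mh$ to $\mg$. The reason a \emph{two-sided} isotopism does not preserve the center of a rack — and why part (ii) is stated only for left isotopisms — is that without $h=g$ the equation $f(x)\rhd_Y g(y)=h(x\rhd y)=h(y)$ does not reduce to a fixed-point statement about the operation $\rhd_Y$, so the center is not recovered. Thus the hypothesis $h=g$ is not a technicality but the crux of the argument.
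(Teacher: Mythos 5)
Your proposal is correct and follows essentially the same route as the paper's proof: both verify the two inclusions for each set directly from the isotopism identity, using surjectivity of the appropriate map to let the free variable sweep out the target and injectivity of $h$ (resp.\ $g$) for the reverse direction. Your extra care about passing from the set of brackets to its span via linearity of $h$, and your closing remark on why $h=g$ is essential in part (ii), are welcome refinements but do not change the argument.
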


\begin{proof}{\ }
	\begin{itemize}
		\item[(i)] Let $x,y \in \mh$ such that $x=f(\bar{x})$, $y=g(\bar{y})$, with $\bar{x},\bar{y} \in \mg$. If $\bar{x} \in \operatorname{Z}_l(\mg)$, then
		$$
		[x,y]_{\mh}=[f(\bar{x}),g(\bar{y})]_{\mh}=h([\bar{x},\bar{y}])=h(0)=0
		$$
		and $x \in \operatorname{Z}_l(\mh)$. Conversely, if $x \in \operatorname{Z}_l(\mh$), then
		$$
		0=[x,y]_\mh=[f(\bar{x}),g(\bar{y})]_\mh=h([x,y]).
		$$
		Thus $[\bar{x},\bar{y}] \in \operatorname{Ker}(h)=0$, i.e.\ $\bar{x} \in \operatorname{Z}_l(\mg)$ and $f(\operatorname{Z}_l(\mg)) = \operatorname{Z}_l(\mathfrak{h})$.
		In a similar way, one can check that $g(\operatorname{Z}_r(\mg)) = \operatorname{Z}_r(\mathfrak{h})$. Finally, for every $\bar{x},\bar{y} \in \mg$
		$$
		h([\bar{x},\bar{y}])=[f(\bar{x}),g(\bar{y})]_\mh \in [\mh,\mh].
		$$
		Coversely, for any $x,y \in \mh$, with $x=f(\bar{x})$ and $y=g(\bar{y})$, we have
		$$
		[x,y]_{\mh}=h([\bar{x},\bar{y}]) \in h([\mg,\mg])
		$$
		and $h([\mg,\mg])=[\mh,\mh]$.
		\item[(ii)] Let $x,y \in Y$ such that $x=f(\bar{x})$, $y=g(\bar{y})$, with $\bar{x},\bar{y} \in X$. If $\bar{x} \in \operatorname{Z}(X)$, then
		$$
		x \rhd y = f(\bar{x}) \rhd_Y g(\bar{y}) = g(\bar{x} \rhd \bar{y}) = g(\bar{y})=y
		$$
		and $x \in \operatorname{Z}(Y)$. Conversely, if $ x \in \operatorname{Z}(Y)$, then
		$$
		g(\bar{y})=y=x \rhd_Y y = f(\bar{x}) \rhd_Y g(\bar{y}) = g(\bar{x} \rhd \bar{y})
		$$
		and the statement is proved, since $g$ is injective.
	\end{itemize}
\end{proof}

The last proposition allows us to conclude that the algebras $\mathfrak{l}_3^{\pm 1}$ and $\mathfrak{h}_3$ are not isotopic, since $\dim_\F \operatorname{Z}_l(\mathfrak{l}_3^{1})=2$ (see Section \ref{HeisC}) and $\dim_\F \operatorname{Z}(\mathfrak{h}_3)=1$. Moreover, we recall that $\mathfrak{l}_3^{1}$ and $\mathfrak{l}_3^{-1}$ are isomorphic, and $\mathfrak{h}_3$ is isotopic to $\mathfrak{l}_3^{\alpha}$, for every $\alpha \neq \pm 1$, thus $\mathfrak{l}_3^{\alpha}$ and $\mathfrak{l}_3^{\pm 1}$ are not isotopic. This remark, combined with the one we illustrated at the end of the previous section, leeds to the following.

\begin{prop}\label{propisotop}
	Let $\mg$ be a three-dimensional indecomposable nilpotent Leibniz algebra with one-dimensional commutator ideal over a field $\F$, with $\operatorname{char}(\F) \neq 2$. Then either $\mg$ is isomorphic to the Heisenberg algebra $\mathfrak{l}_3^{1}$, or $\mg$ is isotopic to the Heisenberg Lie algebra $\mathfrak{h}_3$.
\end{prop}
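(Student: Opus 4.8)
The plan is to turn the statement into a short finite case analysis over the complete list of three-dimensional algebras of the prescribed type and then to feed each case into the isotopisms already built in the previous section. First I would apply the classification of \cite{LM2022} together with the dimension constraints to pin down that list. A Heisenberg algebra $\mathfrak{l}_{2n+1}^A$ is three-dimensional only for $n=1$, in which case $A$ is the $1\times1$ companion matrix of a monic linear polynomial $x-a$, so $A=(a)$ and the family collapses to the one-parameter family $\mathfrak{l}_3^a$, $a\in\F$. The Kronecker algebra $\mathfrak{k}_1$ is three-dimensional, whereas every Dieudonné algebra has dimension $2n+2\ge4$ and so contributes nothing. Hence any $\mg$ as in the statement is isomorphic to some $\mathfrak{l}_3^a$ or to $\mathfrak{k}_1$, and it suffices to place each of these in one of the two alternatives.

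Next I would dispatch the Heisenberg case by the value of the parameter. For $a\neq\pm1$ the Example of the previous section exhibits an explicit principal isotopism $(f,g,\operatorname{id}_{\F^3})\colon\mathfrak{l}_3^a\leftrightarrows\mathfrak{h}_3$ (take $A=\operatorname{I}_2$, $\lambda=1-a$, $\mu=1+a$, both nonzero precisely because $a\neq\pm1$), so these algebras fall under the second alternative. For the two excluded values I would use the isomorphism $\mathfrak{l}_3^{1}\cong\mathfrak{l}_3^{-1}$ recalled just before the statement (the $n=1$ instance of Proposition \ref{propHeis}, realized over any $\F$ by the swap $e\leftrightarrow f$, $h\mapsto-h$): it shows that both $\mathfrak{l}_3^{1}$ and $\mathfrak{l}_3^{-1}$ are isomorphic to $\mathfrak{l}_3^{1}$, placing them in the first alternative.

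For the Kronecker algebra I would again invoke the same Example: choosing $\mu=-\lambda=1/\det(A)$ realizes $\mathfrak{k}_1$ as $\mg_{\lambda,\mu}^A$, and the Example's triple is then an isotopism $\mathfrak{k}_1\leftrightarrows\mathfrak{h}_3$. Alternatively one observes that $\mathfrak{k}_1$ is isotopic to $\mathfrak{l}_3^\alpha$ for any $\alpha\neq\pm1$ and concludes by transitivity, after the one-line check from Definition \ref{isotopismLeib} that the composite $(f'f,g'g,h'h)$ of two isotopisms is again an isotopism and that $(f^{-1},g^{-1},h^{-1})$ inverts one, so that being isotopic is an equivalence relation on Leibniz algebras. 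Either way $\mathfrak{k}_1$ lands in the second alternative, completing the dichotomy.

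The only genuinely delicate point is to make sure the two alternatives are not interchangeable, i.e.\ that the exceptional algebra really requires the first one. Here I would appeal to Proposition \ref{invariants}: since the dimension of the left center is an isotopism invariant while $\dim_\F\operatorname{Z}_l(\mathfrak{l}_3^{1})=2$ and $\dim_\F\operatorname{Z}(\mathfrak{h}_3)=1$, the algebra $\mathfrak{l}_3^{1}$ is not isotopic to $\mathfrak{h}_3$, so the first alternative cannot be absorbed into the second. Beyond this, the argument is an assembly of facts already in hand; the step deserving the most care is simply verifying that the enumeration of three-dimensional algebras above is exhaustive, which is exactly where the constraint $n=1$ and the dimension counts for $\mathfrak{k}_n$ and $\mathfrak{d}_n$ must be applied precisely.
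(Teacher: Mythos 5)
Your proposal is correct and follows essentially the same route as the paper, whose proof is precisely the remark preceding the proposition: combine the classification into $\mathfrak{l}_3^a$ and $\mathfrak{k}_1$, the explicit principal isotopisms of the Example for $a\neq\pm1$ and for $\mathfrak{k}_1$, the isomorphism $\mathfrak{l}_3^{1}\cong\mathfrak{l}_3^{-1}$, and Proposition \ref{invariants} to separate the two alternatives. Your only additions are housekeeping the paper leaves implicit (exhaustiveness of the enumeration, transitivity of isotopy, and an explicit field-independent isomorphism $\mathfrak{l}_3^{a}\cong\mathfrak{l}_3^{-a}$, which is welcome since Proposition \ref{propHeis} is stated only over $\mathbb{C}$).
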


We want now to extend this result to the case that $\dim_\F \mg > 3$.

	\begin{thm}\label{thmiso}
		Let $\F$ be a field with $\operatorname{char}(\F) \neq 2$ and let $\mg$ be a nilpotent Leibniz algebra with $\dim_\F \mg=t \geq 3$ and $\dim_\F [\mg,\mg]=1$.
		\begin{itemize}
			\item[(i)] If $t=2n+2$, then $\mg$ is isomorphic to the \emph{Dieudonn\'e} algebra $\mathfrak{d}_n$.
			\item[(ii)] If $t=2n+1$, then either $\mg$ is isomorphic to the Heisenberg algebra $\mathfrak{l}_{2n+1}^{J_1}$, where $J_1$ is the $n \times n$ Jordan block of eigenvalue $1$, or $\mg$ is isotopic to the Heisenberg Lie algebra $\mathfrak{h}_{2n+1}$.
			\item[(iii)] The Heisenberg algebras $\mathfrak{l}_{2n+1}^{J_1}$ and $\mathfrak{l}_{2n+1}^{J_{-1}}$ are not isotopic to any Lie algebra.
		\end{itemize}
	\end{thm}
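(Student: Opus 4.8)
The plan is to encode each such algebra by a single bilinear form and to reduce the whole statement to the behaviour of the \emph{rank} of that form under isotopism. Since $\dim_\F[\mg,\mg]=1$ forces $\mg$ to be two-step nilpotent with $[\mg,\mg]\subseteq\operatorname{Z}(\mg)$ (Section~1), I would fix a generator $z$ of $[\mg,\mg]$ and define $\beta\colon\mg\times\mg\to\F$ by $[x,y]=\beta(x,y)z$. Centrality of $z$ means $z$ lies in both radicals of $\beta$, so the matrix $B$ of $\beta$ has a well-defined rank $r=\operatorname{rank}(B)\le t-1$, and $\operatorname{Z}_l(\mg)$, $\operatorname{Z}_r(\mg)$ are precisely the left and right radicals of $\beta$; hence $\dim_\F\operatorname{Z}_l(\mg)=\dim_\F\operatorname{Z}_r(\mg)=t-r$. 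By Proposition~\ref{invariants}(i) these dimensions, and therefore $r$ itself, are isotopism invariants. I would also record that $\mg$ is a Lie algebra precisely when $\beta$ is antisymmetric.

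Next I would translate isotopism into matrix language. Given an isotopism $(f,g,h)\colon\mg\leftrightarrows\mathfrak h$, the map $f$ is a linear isomorphism, so $\dim_\F\mg=\dim_\F\mathfrak h$, and Proposition~\ref{invariants}(i) gives $h([\mg,\mg])=[\mathfrak h,\mathfrak h]$, whence $h(z)=c\,w$ for a nonzero scalar $c$, with $w$ a generator of $[\mathfrak h,\mathfrak h]$. Writing $F,G$ for the matrices of $f,g$ and $C$ for the structure matrix of $\mathfrak h$, the defining identity $[f(x),g(y)]_{\mathfrak h}=h([x,y])$ becomes $F^{\mathsf T}CG=cB$. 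Two isotopic algebras thus have equal dimension and $\operatorname{rank}(B)=\operatorname{rank}(C)$; conversely, if $\mg$ and $\mathfrak h$ share dimension and rank, then $B$ and $C$ are equivalent matrices, so one finds invertible $F,G$ with $F^{\mathsf T}CG=B$, and any linear isomorphism $h$ with $h(z)=w$ completes an isotopism. Hence, within a fixed dimension, the isotopism class is determined exactly by $r$.

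With this dictionary the three parts follow from a rank computation applied to the classification recalled in Section~1, by which an indecomposable $\mg$ is one of $\mathfrak l_{2m+1}^A$, $\mathfrak k_m$, $\mathfrak d_m$. For (i), the only family of even dimension $2n+2$ is the Dieudonné family, unique up to isomorphism, so $\mg\cong\mathfrak d_n$. For (ii), the structure matrix of $\mathfrak l_{2n+1}^A$ is $\left(\begin{smallmatrix}0 & I_n+A\\ A^{\mathsf T}-I_n & 0\end{smallmatrix}\right)$, whose rank is $\operatorname{rank}(I_n+A)+\operatorname{rank}(A-I_n)$; this equals $2n$ unless $A$ has $1$ or $-1$ as an eigenvalue, while the structure matrix of $\mathfrak k_n$ has two triangular invertible blocks and also has rank $2n$, and $\mathfrak h_{2n+1}$ carries the standard symplectic form of rank $2n$. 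By the preceding paragraph all these full-rank algebras are isotopic to $\mathfrak h_{2n+1}$. For an indecomposable $\mathfrak l_{2n+1}^A$ the matrix $A$ is a single companion block, so it can have at most one of $\pm1$ as eigenvalue, and then with a one-dimensional kernel; the exceptional cases are exactly $A=J_1$ and $A=J_{-1}$, where the rank drops to $2n-1$. Since $\mathfrak l_{2n+1}^{J_1}\cong\mathfrak l_{2n+1}^{J_{-1}}$ by Proposition~\ref{propHeis}, these give a single isomorphism class, which yields the dichotomy.

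Finally, part (iii) is a parity argument: $\mathfrak l_{2n+1}^{J_1}$ (and likewise $\mathfrak l_{2n+1}^{J_{-1}}$) has rank $2n-1$, which is odd. Any Lie algebra isotopic to it would, by Proposition~\ref{invariants}, again have a one-dimensional commutator ideal, hence an antisymmetric structure form, whose rank is necessarily even; as the rank is an isotopism invariant, this is impossible. The step I expect to be most delicate is establishing the two-sided equivalence ``isotopic $\iff$ equal rank'' cleanly—in particular justifying that $f,g$ need only be linear isomorphisms while $h$ contributes only the nonzero scalar $c$ on $[\mg,\mg]$—together with verifying that for an indecomposable $\mathfrak l_{2n+1}^A$ the corank drops by exactly one (and not more) in the exceptional cases; everything else is bookkeeping on top of the classification and Proposition~\ref{invariants}.
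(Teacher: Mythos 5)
Your argument is correct, but it takes a genuinely different route from the paper's. You encode each algebra with $\dim_\F[\mg,\mg]=1$ by the form $\beta$ with $[x,y]=\beta(x,y)z$, identify $\operatorname{Z}_l(\mg)$ and $\operatorname{Z}_r(\mg)$ with the radicals of $\beta$, and translate an isotopism $(f,g,h)$ into the matrix identity $F^{\mathsf T}CG=cB$; since two square matrices over a field are equivalent exactly when they have the same rank, the pair $(\dim_\F\mg,\operatorname{rank}\beta)$ becomes a \emph{complete} isotopism invariant for this class, and the theorem reduces to the rank computations you perform (which check out: companion matrices are nonderogatory, so the corank of $I_n\pm A$ is at most one, and equals one precisely for $A=J_{\mp1}$). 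The paper instead exhibits explicit left principal isotopisms $(f,\operatorname{id}_{\F^t},\operatorname{id}_{\F^t})$ --- a concrete formula for $\mathfrak{k}_n$, and for $\mathfrak{l}_{2n+1}^{A}$ the block map built from $I_n+A$ and $I_n-A^{\mathsf T}$, invertible exactly when $p(\pm1)\neq0$ --- and proves (iii) by invoking the classification of Lie algebras with one-dimensional derived subalgebra to force $\dim_\F\operatorname{Z}(\mpp)$ odd against $\dim_\F\operatorname{Z}_l(\mathfrak{l}_{2n+1}^{J_1})=2$. Your route buys more: a computable invariant, the converse implication (equal rank implies isotopic), and a cleaner (iii), since an antisymmetric form has even rank and no structure theory is needed (note that a Lie algebra $\mpp$ with $\dim_\F[\mpp,\mpp]=1$ need not be nilpotent, so $z$ need not be central, but the identification of $\operatorname{Z}_l$ with the radical and the antisymmetry of $\beta$ do not use centrality, so your parity argument survives). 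What the paper's construction buys, and yours does not automatically deliver, is the specific shape $g=h=\operatorname{id}$ of the isotopism: Lemma \ref{lemmaracks}(iii) and the final theorem explicitly rely on Theorem \ref{thmiso} producing an isotopism of the form $(f,\operatorname{id}_{\F^t},\operatorname{id}_{\F^t})$ in order to transfer it to the integrating Lie racks, whereas your equivalence $F^{\mathsf T}CG=B$ yields $(f,g,h)$ with $g\neq h$ in general; if your proof were substituted, a short normalization step (arranging $g(z)=w$ so that $h$ may be taken equal to $g$) would have to be added. Finally, like the paper's proof, yours tacitly assumes $\mg$ indecomposable, as stated in the abstract; the literal statement of (ii) fails without that hypothesis (e.g.\ $\mathfrak{h}_3\oplus\F^2$ has rank $2$ and is isotopic to neither $\mathfrak{l}_5^{J_1}$ nor $\mathfrak{h}_5$), so this is a defect of the statement rather than of either argument.
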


\begin{proof}{\ }
	\begin{itemize}
		\item[(i)] If $\dim_\F \mg$ is even, then $\mg$ must be isomorphic to the Dieudonn\'e algebra $\mathfrak{d}_n$, where $t=2n+2$.
		\item[(ii)] If $\dim_\F \mg = 2n+1$ and $\mg$ is not isomorphic to $\mathfrak{l}_{2n+1}^{J_{1}}$, then $\mg \cong \mathfrak{k}_n$ or $\mg \cong \mathfrak{l}_{2n+1}^{A}$, where $A$ is the companion matrix of the power of an irreducible monic polynomial $p(x)^k \in \F[x]$ and it is not similar to the Jordan blocks $J_1$ and $J_{-1}$. We want to show that both these Leibniz algebras are isotopic to the Heisenberg Lie algebra $\mathfrak{h}_{2n+1}$. The Leibniz algebra $\mathfrak{k}_n$ is isotopic to the $(2n+1)$-dimensional Heisenberg Lie algebra via the left principal isotopism $(f,\operatorname{id}_{\mathbb{F}^t},\operatorname{id}_{\mathbb{F}^t})$, where
		\begin{gather*}
			f(x_1,\ldots,x_n,y_1,\ldots,y_n,z)=\\=(x_1+x_2,x_2+x_3,\ldots,x_{n-1}+x_n,x_n,-y_1,y_1-y_2,y_2-y_3,\ldots,y_{n-1}-y_n,z),
		\end{gather*}
		for every $(x_1,\ldots,x_n,y_1,\ldots,y_n,z)\in\mathbb{F}^t$. The Heisenberg algebra $\mathfrak{l}_{2n+1}^{A}$ is isotopic to $\mathfrak{h}_{2n+1}$ via the triple $(f, \operatorname{id}_{\mathbb{F}^t},\operatorname{id}_{\mathbb{F}^t})$, where $f$ is the linear isomorphism defined by the matrix 
		\[\begin{pmatrix}
			 & & & \vline & & & & \vline & 0\\
			& I_n+A & & \vline & & \bigzero & & \vline & \vdots\\
			& & & \vline & & & & \vline & 0\\
			\hline
			 & & & \vline & & & & \vline & 0\\
			 & \bigzero & & \vline & & I_n-A^T& & \vline & \vdots\\
			  & & & \vline & & & & \vline & 0\\
			  \hline
			0 & \cdots \cdots & 0 & \vline & 0 & \cdots \cdots & 0 & \vline & 1 
		\end{pmatrix}\]
 We observe that $f$ is a bijection since the matrices $I_n+A$ and $I_n-A^T$ are invertible. Indeed one has
	$$\operatorname{det}(xI_n+A)=(-1)^n f(x),$$
	$$\operatorname{det}(xI_n-A^T)=\operatorname{det}(xI_n-A)=f(x)$$
	and for $x=1$
	$$
	\operatorname{det}(I_n+A)=(-1)^n f(-1),
	$$
	$$
	\operatorname{det}(I_n-A^T)=f(1).
	$$
	Thus $f$ is not invertible if and only if $p(1)=0$ or $p(-1)=0$. Since $p(x)$ is irreducible over $\F$, this would imply that $p(x)=x-1$ or $p(x)=x+1$ and $A$ would be similar to the $n \times n$ Jordan block $J_1$ or $J_{-1}$. We have a contradiction since we supposed that $\mg$ is not isomorphic to the Heisenberg algebra $\mathfrak{l}_{2n+1}^{J_{1}}$. Finally, by Proposition \ref{invariants} we have that $\mathfrak{l}_{2n+1}^{J_{1}}$ and $\mathfrak{l}_{2n+1}^{J_{-1}}$ are not isotopic to $\mathfrak{h}_{2n+1}$ since
		$$
		\dim_\F \operatorname{Z}_l(\mathfrak{l}_{2n+1}^{J_{\pm 1}})=2,
		$$
		$$
		\dim_\F \operatorname{Z}(\mathfrak{h}_{2n+1})=1
		$$
		and the dimension of the left center is an isotopism invariant.
		\item[(iii)] If we suppose that there exists a Lie algebra $\mathfrak{p}$ and an isotopism
		$$
		(f,g,h)\colon \mathfrak{l}_{2n+1}^{J_{1}} \leftrightarrows \mpp,
		$$
		then, by Proposition \ref{invariants}, $\dim_\F[\mpp,\mpp]=1$ and $\dim_\F \operatorname{Z}(\mpp)=2$. For the classification of Lie algebras with one-dimensional commutator ideal (see Section 3 of \cite{erdmann2006introduction}), $\mpp=\mpp_1 \oplus \mpp_2$, where $\mpp_1$ is an abelian algebra and $\mpp_2$ is either the non-abelian Lie algebra of dimension $2$ (i.e.\ $\mpp_2$ has basis $\lbrace e_1,e_2 \rbrace$ and the Lie bracket is defined by $[e_1,e_2]=e_1$), or it is an Heisenberg algebra $\mathfrak{h}_{2k+1}$. In the first case, $\dim_\F \mpp_1$ must be odd. Thus $\dim_\F\operatorname{Z}(\mpp)$ is also odd, since $\operatorname{Z}(\mpp)=\mpp_1$. In the second one, $\dim_\F \mpp_1$ must be even and $$\dim_\F \operatorname{Z}(\mpp)=\dim_\F \mpp_1 + \dim_\F \operatorname{Z}(\mh_{2k+1})=\dim_\F \mpp_1 +1$$ is odd. Hence, in both cases we have a contradiction and $\mathfrak{l}_{2n+1}^{J_{1}}$ cannot be isotopic to a Lie algebra. The statement is also valid for the Leibniz algebra $\mathfrak{l}_{2n+1}^{J_{-1}}$, since it is isomorpich to $\mathfrak{l}_{2n+1}^{J_{1}}$.
	\end{itemize}
\end{proof}

In the last theorem we showed that an isotopism between two nilpotent Leibniz algebras with one-dimensional commutator ideal $\mg,\mh$ can be always chosen of the form $(f,\operatorname{id}_{\F^t},\operatorname{id}_{\F^t})$. In the next lemma we show that, if $\F$ is the field of real numbers, such an isotopism turns out to be also an isotopism between the Lie racks integrating $\mg$ and $\mh$. We recall that, by Theorem \ref{thm2}, we can identify the underlying vector space of a Lie rack $X$ integrating a two-step nilpotent Leibniz algebra, with its tangent space $\operatorname{T}_1 X$.
	
	\begin{lemma}\label{lemmaracks}
		Let $\F=\mathbb{R}$ and let $\mg,\mathfrak{h}$ be two-step nilpotent algebras with $\dim_\R \mg=\dim_\R \mh=t$. Let $X,Y$ be the Lie racks integrating $\mg$ and $\mh$ respectively. 
		\begin{itemize}
			\item[(i)] If $(f,g,g) \colon \mg \leftrightarrows \mh$ is a left Leibniz algebra isotopism, then it is also a Lie rack isotopism between $X$ and $Y$.
			\item[(ii)] If $(f,g,h) \colon X \leftrightarrows Y$ is a Lie rack isotopism, then $g=h$ and it turns out to be a Leibniz algebra isotopism between $\mg$ and $\mh$.
			\item[(iii)] Let $\dim_\R [\mg,\mg] = \dim_\R [\mh,\mh]=1$. Then $\mg$ is isotopic to $\mh$ if and only if $X$ is isotopic to $Y$.
		\end{itemize} 
	\end{lemma}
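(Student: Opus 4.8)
The plan is to use Theorem~\ref{thm2} to identify each Lie rack with the underlying vector space of the algebra it integrates, so that $X=\mg$ and $Y=\mh$ as manifolds, the rack products are $u\rhd v=v+[u,v]_\mg$ and $u\rhd_Y v=v+[u,v]_\mh$, and the common unit is the zero vector; under this identification $\operatorname{T}_1X=\mg$, $\operatorname{T}_1Y=\mh$, and the Leibniz bracket is recovered as the mixed second derivative of $\rhd$ at the unit. Part~(i) is then a one-line verification: a left isotopism $(f,g,g)$ consists of linear isomorphisms, hence of diffeomorphisms fixing $0$, and
$$
f(x)\rhd_Y g(y)=g(y)+[f(x),g(y)]_\mh=g(y)+g([x,y]_\mg)=g\bigl(y+[x,y]_\mg\bigr)=g(x\rhd y),
$$
so $(f,g,g)$ is a Lie rack isotopism.

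For part~(ii) I would first extract $g=h$ from pointedness alone: evaluating $f(x)\rhd_Y g(y)=h(x\rhd y)$ at $x=1$ and using $f(1)=1_Y$, $1_Y\rhd_Y w=w$ and $1\rhd y=y$ gives $g(y)=h(y)$ for every $y$. To produce the Leibniz isotopism I would then differentiate the rack relation at the unit. Fix $\xi,\eta\in\operatorname{T}_1X$ and smooth paths $\gamma_1,\gamma_2$ through $1$ with velocities $\xi,\eta$; composing with $f,g$ and using the bracket formula yields
$$
[\operatorname{T}_1f(\xi),\operatorname{T}_1g(\eta)]_\mh=\frac{\partial^2}{\partial s\,\partial t}\Big|_{0}f(\gamma_1(s))\rhd_Y g(\gamma_2(t))=\frac{\partial^2}{\partial s\,\partial t}\Big|_{0}h\bigl(\gamma_1(s)\rhd\gamma_2(t)\bigr).
$$
Writing $\Gamma(s,t)=\gamma_1(s)\rhd\gamma_2(t)$, the rack axiom $x\rhd1=1$ forces $\Gamma(s,0)\equiv1$, so $\partial_s\Gamma|_{0}=0$ and the Hessian cross-term of $h\circ\Gamma$ drops out; the chain rule then leaves exactly $\operatorname{T}_1h\bigl(\partial^2_{st}\Gamma|_0\bigr)=\operatorname{T}_1h([\xi,\eta]_\mg)$. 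Hence $(\operatorname{T}_1f,\operatorname{T}_1g,\operatorname{T}_1h)$ is a Leibniz isotopism, with $\operatorname{T}_1g=\operatorname{T}_1h$ by the first step. (For these linear racks one may instead differentiate the explicit identity $g(y)+[f(x),g(y)]_\mh=g(y+[x,y]_\mg)$ twice, obtaining the same conclusion.)

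For part~(iii), the forward implication combines the previous parts with the classification. If $\mg$ and $\mh$ are isotopic with one-dimensional commutator ideal, then by the discussion following Theorem~\ref{thmiso} the isotopism may be chosen of the form $(f,\operatorname{id},\operatorname{id})$---a left isotopism---so part~(i) integrates it to a rack isotopism $X\leftrightarrows Y$. Conversely, any Lie rack isotopism $X\leftrightarrows Y$ yields a Leibniz isotopism $\mg\leftrightarrows\mh$ by part~(ii). I expect the main obstacle to be part~(ii): one must be careful that it is the tangent maps at the unit, not $f,g,h$ globally, that carry the algebraic structure, and that the off-diagonal second-order contribution genuinely vanishes---which is precisely what $x\rhd1=1$ secures. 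A secondary point in~(iii) is that part~(i) requires a \emph{left} isotopism; this is where Theorem~\ref{thmiso} is essential, since it guarantees that within this class every isotopism can be normalized to the form $(f,\operatorname{id},\operatorname{id})$ (and that left isotopisms compose and invert to left isotopisms).
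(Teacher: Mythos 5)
Your proof is correct and follows essentially the same route as the paper: part (i) is the same one-line computation, part (ii) extracts $g=h$ from pointedness exactly as the paper does, and part (iii) combines the normalization of Theorem~\ref{thmiso} to a left principal isotopism (closed under composition and inversion) with parts (i) and (ii). The only divergence is in the second half of (ii): the paper works directly with the identity $f(x)\rhd_Y g(y)=g(x\rhd y)$ and expands $g(y+[x,y]_\mg)=g(y)+g([x,y]_\mg)$, which tacitly treats $g$ as additive, whereas you differentiate at the unit, use $x\rhd 1=1$ to kill the Hessian cross-term, and conclude that the triple of tangent maps $(\operatorname{T}_1f,\operatorname{T}_1g,\operatorname{T}_1h)$ is a Leibniz isotopism. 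Your version is the more careful one given that a Lie rack isotopism only requires diffeomorphisms, at the cost of producing the isotopism for the differentials rather than for $f,g,h$ themselves; either reading suffices for part (iii), which only needs the existence of some isotopism between $\mg$ and $\mh$.
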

	
\begin{proof}
	The multiplications of the Lie racks $X$ and $Y$ can be written as
	$$
	x \rhd y = y + [x,y]_{\mg}, \quad x' \rhd_Y y' = y' + [x',y']_{\mh}, \quad \forall x,y \in X,\; \forall x',y' \in Y.
	$$
	\begin{itemize}
		\item[(i)] If $(f,g,g)\colon \mg \leftrightarrows \mathfrak{h}$ is an isotopism of Leibniz algebra, then
		$$
		f(x) \rhd_{Y} g(y)= g(y) + [f(x),g(y)]_{\mathfrak{h}}=g(y)+g([x,y])=g(x \rhd y),
		$$
		for every $x,y \in X$.
		\item[(ii)] If $(f,g,h) \colon X \leftrightarrows Y$ is an isotopism of Lie racks, then
		$$
		g(y)=1_Y \rhd_Y g(y)=f(1_X) \rhd_Y g(y) = h(1_X \rhd y) =h(y), \; \; \forall y \in X,
		$$
		thus $g=h$ and the triple $(f,g,g)$ becomes an isotopism between $\mg$ and $\mh$, since
		$$
		g(y) + [f(x),g(y)]_{\mh}=f(x) \rhd_Y g(y) = g(x \rhd y) = g(y) + g([x,y])
		$$
		for every $x,y \in X$, and then
		$$
		[f(x),g(y)]_{\mh}=g([x,y]).
		$$
		\item[(iii)] If $X$ and $Y$ are isotopic Lie racks, then from (ii) $\mg$ and $\mh$ are isotopic Leibniz algebra. Conversely, if $\mg$ and $\mh$ are isotopic nilpotent Leibniz algebras with one-dimensional commutator ideal, then by Theorem \ref{thmiso}, an isotopism between them can be chosen of the form $(f,\operatorname{id}_{\F^t},\operatorname{id}_{\F^t})$ and, from (ii), it becomes an isotopism between $X$ and $Y$.
	\end{itemize}
	\end{proof} 

The last result allows us to describe the isotopism classes of Lie global racks integrating the indecomposable nilpotent Leibniz algebras with one-dimensional commutator ideal.

	\begin{thm}
		Let $\F=\mathbb{R}$ and let $X$ be a Lie rack integrating a nilpotent Leibniz algebra $\mg$ with $\dim_\F \mg=t$ and $\dim_\F [\mg,\mg]=1$.
		\begin{itemize}
			\item[(i)] If $t=2n+2$, then $X$ is isomorphic to the \emph{Dieudonn\'e} rack $D_n$.
			\item[(ii)] If $t=2n+1$, then either $X$ is isomorphic to the Heisenberg rack $R_{2n+1}^{J_1}$, where $J_1$ is the $n \times n$ Jordan block of eigenvalue $1$, or $X$ is isotopic to the conjugation of the Heisenberg Lie group $\operatorname{Conj}(H_{2n+1})$.
			\item[(iii)] The Heisenberg racks $R_{2n+1}^{J_1}$ and $R_{2n+1}^{J_{-1}}$ are not isotopic to any Lie quandle. 
		\end{itemize}
	\end{thm}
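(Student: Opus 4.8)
The plan is to transport the algebraic trichotomy of Theorem~\ref{thmiso} across the correspondence of Lemma~\ref{lemmaracks} and Theorem~\ref{thmquandle}, so that essentially no new computation is required. Throughout I identify $X$ with its tangent algebra $\mg=\operatorname{T}_1 X$, whose rack product is $x\rhd y=y+[x,y]_\mg$. The one preliminary I record is the bridge between \emph{isomorphisms}: if $\phi\colon\mg\to\mh$ is a Leibniz algebra isomorphism, then $(\phi,\phi,\phi)$ is in particular a left isotopism, so by Lemma~\ref{lemmaracks}(i) it is a Lie rack isotopism $X\leftrightarrows Y$; since the same bijection $\phi$ occupies all three slots, the defining identity reads $\phi(x)\rhd_Y\phi(y)=\phi(x\rhd y)$, i.e.\ $\phi$ is an isomorphism of Lie racks. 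Hence isomorphic two-step nilpotent algebras integrate to isomorphic Lie racks.

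For (i), Theorem~\ref{thmiso}(i) gives $\mg\cong\mathfrak{d}_n$ when $t=2n+2$, so the bridge above yields $X\cong D_n$. For (ii), Theorem~\ref{thmiso}(ii) splits into two cases. If $\mg\cong\mathfrak{l}_{2n+1}^{J_1}$ then $X\cong R_{2n+1}^{J_1}$ by the same bridge. Otherwise $\mg$ is isotopic to $\mathfrak{h}_{2n+1}$, and Lemma~\ref{lemmaracks}(iii) turns this into an isotopism between $X$ and the rack integrating $\mathfrak{h}_{2n+1}$; but that rack is exactly $R_{2n+1}^{0_{n\times n}}=\operatorname{Conj}(H_{2n+1})$, so $X$ is isotopic to $\operatorname{Conj}(H_{2n+1})$. (The Kronecker case $\mg\cong\mathfrak{k}_n$ falls here, since $\mathfrak{k}_n$ is itself isotopic to $\mathfrak{h}_{2n+1}$.)

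For (iii) I argue by contradiction: suppose $R_{2n+1}^{J_1}$ were isotopic to a Lie quandle $Q$. By Theorem~\ref{thmquandle} a Lie quandle integrates a Lie algebra $\mathfrak{p}$, and since an isotopism is a triple of bijections of the underlying manifolds, $\dim_\R\mathfrak{p}=2n+1$. Differentiating the rack isotopism at the unit produces a Leibniz algebra isotopism $\mathfrak{l}_{2n+1}^{J_1}\leftrightarrows\mathfrak{p}$ with $\mathfrak{p}$ a Lie algebra, which contradicts Theorem~\ref{thmiso}(iii); the identical argument rules out $R_{2n+1}^{J_{-1}}$, which is isomorphic to $R_{2n+1}^{J_1}$ anyway.

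I expect the only genuinely delicate point to be this last differentiation step. Lemma~\ref{lemmaracks}(ii) is literally stated for racks integrating two-step nilpotent algebras, where the product has the linear form $x\rhd y=y+[x,y]_\mg$; the target $Q$ is an arbitrary Lie quandle and need not have this form. The resolution is to note that the computation behind Lemma~\ref{lemmaracks}(ii) survives at the infinitesimal level for any pointed Lie rack: the pointed-rack axioms $1\rhd y=y$ and $x\rhd 1=1$ force the first-order (mixed-Hessian) terms to vanish when one pushes a pair of paths through the isotopism, and Kinyon's second-order bracket formula identifies the surviving second derivative with the bracket. Thus $(\operatorname{d}f_1,\operatorname{d}g_1,\operatorname{d}g_1)$ is a Leibniz algebra isotopism between $\operatorname{T}_1 R_{2n+1}^{J_1}=\mathfrak{l}_{2n+1}^{J_1}$ and $\operatorname{T}_1 Q=\mathfrak{p}$, which is exactly the input Theorem~\ref{thmiso}(iii) needs.
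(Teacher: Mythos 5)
Your proof is correct and follows essentially the same route as the paper: parts (i) and (ii) transport Theorem \ref{thmiso} through Lemma \ref{lemmaracks} (treating an isomorphism as the isotopism $(\phi,\phi,\phi)$), and part (iii) combines Theorem \ref{thmquandle} with Theorem \ref{thmiso}(iii). Your extra care in (iii) --- justifying by differentiation at the unit that a Lie rack isotopism onto an \emph{arbitrary} Lie quandle still induces a Leibniz algebra isotopism of tangent algebras, since Lemma \ref{lemmaracks}(ii) is only stated for racks of the linear form $x \rhd y = y + [x,y]$ --- is a genuine improvement on the paper, whose proof asserts that step without comment; and the mutual exclusivity of the two alternatives in (ii), which the paper verifies separately via centers, already follows from your (iii) because $\operatorname{Conj}(H_{2n+1})$ is a Lie quandle.
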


\begin{proof}{\ }
	\begin{itemize}
		\item[(i)] If $t$ is even, then $\mg \cong \mathfrak{d}_n$ and $X$ is isomorphic to the Dieudonné rack $D_n$.
			\item[(ii)] If $t=2n+1$ and $\mg \cong \mathfrak{l}_{2n+1}^{J_{1}}$, then $X$ is isomorphic to the Heisenberg rack $R_{2n+1}^{J_{1}}$. If this is not the case, then $\mg$ is isotopic to $\mh_{2n+1}$ and $X$ is isomorphic either to the Kronecker rack $K_n$, or to $R_{2n+1}^{A}$, where $A=J_{\alpha}$ with $\alpha \in \mathbb{R} \setminus \lbrace \pm 1 \rbrace$, or $A=J_R$ and $R=R_{\alpha,\beta}$ as in Section \ref{secHeisReal}. Note that the last case is admissible if and only if $n$ in even. Since both $\mathfrak{k}_n$ and $\mathfrak{l}_{2n+1}^{A}$ are isotopic to the Heisenberg Lie algebra $\mathfrak{h}_{2n+1}$ with a left principal isotopism $(f,\operatorname{id}_{\F^{t}},\operatorname{id}_{\F^{t}})$, by Lemma \ref{lemmaracks} we can conclude that both $K_n$ and $R_{2n+1}^{A}$ are isotopic to $\operatorname{Conj}(H_{2n+1})$. Thus $X$ is isotopic to the conjugation of the Heisenberg Lie group. Finally $R_{2n+1}^{J_{1}}$ and $\operatorname{Conj}(H_{2n+1})$ are not isotopic. Indeed, if we suppose that there exists an isotopism
		$$
		(f,g,h) \colon R_{2n+1}^{J_{1}} \leftrightarrows \operatorname{Conj}(H_{2n+1}),
		$$
		then, by Lemma \ref{lemmaracks}, $g=h$ and we obtain a contradiction, since $(f,g,g)$ would become an isotopism between the Leibniz algebras $\mathfrak{l}_{2n+1}^{J_{ì1}}$ and $\mathfrak{h}_{2n+1}$. Another way to show that $R_{2n+1}^{J_{1}}$ and $\operatorname{Conj}(H_{2n+1})$ are not isotopic racks is to see that
		$$
		\operatorname{Z}(R_{2n+1}^{J_{1}})=\lbrace (0,y,z) \; | \; y,z \in \R \rbrace,
		$$
		$$
		\operatorname{Z}(\operatorname{Conj}(H_{2n+1}))=\lbrace (0,0,z) \; | \; z \in \R \rbrace,
		$$
		where we use the same notation of Example \ref{exHeis}, and we proved in Proposition \ref{invariants} that the dimension of the center of a Lie rack is a left isotopism invariant.
		\item[(iii)] If $R_{2n+1}^{J_1}$ were isotopic to a Lie quandle $X$, then $\mathfrak{l}_{2n+1}^{J_1}$ would be isotopic to the Leibniz algebra $\mg=\operatorname{T}_1 X$. By Theorem \ref{thmquandle}, $\mg$ is a Lie algebra and $X=\operatorname{Conj}(G)$, where $\operatorname{Lie}(G)=\mg$. Thus $\mathfrak{l}_{2n+1}^{J_1}$ would be isotopic to a Lie algebra and, by (iii) of Theorem \ref{thmiso}, this is a contradiction. The same argument can be used for the Lie rack $R_{2n+1}^{J_{-1}}$.
	\end{itemize}
\end{proof}

In the last section we saw that, though the problem of finding isomorphism classes of the complex Heisenberg algebras $\mathfrak{l}_{2n+1}^{J_a}$ is still open, the notion of isotopism allow us to classify all the possible non-isotopic nilpotent Leibniz algebras with one-dimensional commutator ideal. Moreover, this induces also a classification of the non-isotopic Lie racks integrating such class of nilpotent Leibniz algebras.

%

\printbibliography
	
\end{document}